\crefname{thm}{Theorem}{Theorem}
\crefname{lem}{Lemma}{Lemma}
\crefname{prop}{Proposition}{Proposition}
\crefname{cor}{Corollary}{Corollary}
\crefname{eg}{Example}{Example}
\crefname{definition}{Definition}{Definition}
\crefname{equation}{}{}
\crefname{section}{}{}
\newtheorem{thm}[subsubsection]{Theorem}
\newtheorem{lem}[subsubsection]{Lemma}
\newtheorem{prop}[subsubsection]{Proposition}
\newtheorem{cor}[subsubsection]{Corollary}
\newtheorem{eg}[subsubsection]{Example}
\newtheorem{definition}[subsubsection]{Definition}
\newtheorem{rem}[subsubsection]{Remark}
\newtheorem{prob}[subsubsection]{Problem}
\def\fr#1{\mathfrak{#1}}
\def\bb#1{\mathbb{#1}}
\def\ali#1{\begin{align}#1\end{align}}
\def\dis#1{\displaystyle{#1}}
\def\w{\wedge}
\def\bw{\bigwedge}
\def\o{\oplus}
\def\bo{\bigoplus}
\def\ang#1{\langle #1\rangle}
\def\dim#1{{\rm dim\ }#1}
\def\0{\{ 0\}}
\def\Ker#1{{\rm Ker}\ #1}
\def\Im#1{{\rm Im}\ #1}
\title[gradings associated with nilpotent fundamental groups]{Gradings on nilpotent Lie algebras associated with the nilpotent fundamental groups of smooth complex algebraic varieties}
\author{Taito Shimoji}
\date{}
\address{Department of Mathematics, Graduate School of Science, The University of Osaka, Osaka, Japan}
\email{u215629i@ecs.osaka-u.ac.jp}
\begin{document}
\maketitle
\begin{abstract}
Let $\Gamma$ be a lattice in a simply-connected nilpotent Lie group $N$ whose Lie algebra $\fr{n}$ is $p$-filiform. We show that  $\Gamma$ is either abelian or $2$-step nilpotent if $\Gamma$ is isomorphic to the fundamental group of a smooth complex algebraic variety. Moreover as an application of our result, we give a required condition of a lattice in a simply-connected nilpotent Lie group of dimension less than or equal to six to be isomorphic to the fundamental group of a smooth complex algebraic variety.
\end{abstract}
\section{Introduction}
The purpose of this paper is to give a class of finitely generated torsion-free nilpotent groups which cannot be isomorphic to the fundamental groups of smooth complex (possibly non-projective) algebraic varieties.
Let $N$ be a simply connected nilpotent Lie group and $\frak n$ the Lie algebra of $N$.
A lattice $\Gamma$ of  $N$ is a discrete subgroup such that the quotient $N/\Gamma$ is compact.
It is known that a lattice  $\Gamma$ of  $N$ is  a finitely generated torsion-free nilpotent group and conversely any finitely generated torsion-free nilpotent group  can be embedded into a unique simply connected nilpotent Lie group as a lattice (see \cite{Raghu}).
For a positive integer $p$, an $m$-dimensional nilpotent Lie algebra $\fr{n}$ is said to be \textit{$p$-filiform} Lie algebra if 
\ali{\dim C^i\fr{n}=m-i-p}
where $C^i\fr{n}$ is defined by $C^0\fr{n}:=\fr{n}$ and $C^i\fr{n}:=[\fr{n},C^{i-1}\fr{n}].$ In particular, the $1$-filiform Lie algebras are called the \textit{filiform} Lie algebras. 
The filiform is an important class of nilpotent Lie algebras whose lower central series decrease as slowly as possible. In \cite{Nil}, the filiform Lie algebras are first considered when classifying the complex nilpotent Lie algebras of dimension up to $7$. In particular, the $3$-dimensional filiform Lie algebra is only the $3$-dimensional Heisenberg Lie algebra.
On the other hand, the class of  filiform Lie algebras of higher dimensions is large and not classified completely (see \cite{Nil} and \cite{pfiliform}).
We obtain a large class of finitely generated torsion-free nilpotent groups which cannot be isomorphic to the fundamental groups of smooth complex algebraic varieties.
Our main theorem is the following:

\begin{thm}[\cref{Cor503}]
Let $\Gamma$ be a finitely generated torsion-free nilpotent group. Let $N$ be a simply-connected nilpotent Lie group such that $\Gamma$ is a lattice in $N$. Assume that the Lie algebra of $N$ $\fr{n}$ is $p$-filiform. If $\dim \fr{n}\geq p+3$, then $\Gamma$ is not isomorphic to the fundamental group of any  smooth complex algebraic variety. {\it i.e.} If the fundamental group of a smooth complex algebraic variety is isomorphic to a lattice in a simply connected nilpotent Lie group whose Lie algebra is $p$-filiform, then its nilpotency class is at most two.
\end{thm}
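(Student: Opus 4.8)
Here is the approach I would take.

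\medskip

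The plan is to work with the Malcev Lie algebra and the mixed Hodge structure that Morgan attached to the fundamental group of a smooth complex variety. Since $\Gamma$ is a lattice in $N$, its rational Malcev (pronilpotent) Lie algebra is $\fr n$ itself (see \cite{Raghu}), and if moreover $\Gamma\cong\pi_1(X)$ for a smooth complex algebraic variety $X$, then this Lie algebra carries a functorial mixed Hodge structure for which the bracket is a morphism of mixed Hodge structures; the generating space $H_1:=\fr n/C^1\fr n\cong H^1(X)^{\vee}$ has weights $-1$ and $-2$ (because $H^1$ of a smooth variety has weights $1,2$ by Deligne), and the ideal of relations of $\fr n$ over the free Lie algebra $\mathbb L(H_1)$ is generated in weights $-2,-3,-4$ (Morgan). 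Passing to the associated graded $\fr g=\bigoplus_{k\ge1}\fr g_k$ of the lower central series: since the bracket is a morphism of mixed Hodge structures each $C^i\fr n$ is a sub-mixed-Hodge-structure, so $\fr g$ again carries a mixed Hodge structure with the bracket a morphism of mixed Hodge structures, and $\fr g$ is still $p$-filiform, so $\dim\fr g_1=p+1$ and $\dim\fr g_k=1$ for $2\le k\le c$, where $c=\dim\fr n-p$ is the nilpotency class. Write $B:=W_{-2}H_1\subseteq H_1$ for the weight $-2$ part and $A:=H_1/B=\operatorname{gr}^W_{-1}H_1$, a pure Hodge structure of weight $-1$ of even dimension (it is dual to the pure weight-one structure $W_1H^1(X)$). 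Assuming $\dim\fr n\ge p+3$, i.e.\ $c\ge3$, I will derive a contradiction.

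\medskip

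The first step is to pin down the weight of the one-dimensional space $\fr g_2$. The bracket induces a surjection of mixed Hodge structures $\Lambda^2H_1\twoheadrightarrow\fr g_2$, and $\Lambda^2H_1$ has only weights $-2,-3,-4$, so $\fr g_2$ is pure of some weight $w_2\in\{-2,-3,-4\}$; since a one-dimensional rational Hodge structure has even weight, $w_2\neq-3$. If $w_2=-2$, strictness forces the bracket to kill the weight-$(\le\! -3)$ part $B\wedge H_1$ of $\Lambda^2H_1$, so $[B,H_1]=0$ and hence, by the Jacobi identity, $[B,\fr g_2]=0$; therefore the bracket $H_1\otimes\fr g_2\to\fr g_3$ kills $B\otimes\fr g_2$ and factors through $(H_1/B)\otimes\fr g_2\cong A\otimes\fr g_2$, a pure Hodge structure of weight $-3$, onto $\fr g_3$. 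Then $\fr g_3$ would be a one-dimensional Hodge structure of odd weight $-3$ — impossible, since $\fr g_3\neq0$ as $c\ge3$.

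\medskip

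It remains to exclude $w_2=-4$, the heart of the argument. Here $\fr g_2$ is pure of weight $-4$, and inductively every $\fr g_k$ ($k\ge2$) is pure of weight $-2k$: $\fr g_k$ is a quotient of $\fr g_1\otimes\fr g_{k-1}$, which has weights $-2k+1$ and $-2k$, and being one-dimensional it cannot have the odd weight $-2k+1$. Consequently the weight-$(\le\! -2)$ part $\fr b:=W_{-2}\fr g$ — which is $B$ in degree one and is all of $\fr g_k$ for $k\ge2$ — is an ideal, generated by $B$, with $\fr b_k=\fr g_k$ for $k\ge2$; in particular $\fr b$ is finite-dimensional and nilpotent of class exactly $c$. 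Now a weight–degree bookkeeping on the free Lie algebra — using that $\operatorname{gr}^W$ of a free Lie algebra is free and that the relation ideal of $\fr g$ is generated in weights $\ge-4$, so that the only relations landing in the minimal-weight subalgebra generated by $B$ are quadratic — shows that $\fr b\cong\mathbb L(B)/\langle R\rangle$ with $R\subseteq\Lambda^2B$, and $R$ has codimension one since $\fr b_2=\fr g_2$ is one-dimensional. Thus the bracket $\Lambda^2B\to\fr b_2$ is a single nonzero alternating form $\beta$, and splitting off the radical of $\beta$ and choosing a symplectic basis of a complement $B'$ identifies $\fr b$ with $(\text{abelian})\times\mathbb L(B')/\langle\ker(\beta|_{\Lambda^2B'})\rangle$; the second factor is the free Lie algebra on two generators when $\dim B'=2$ and is $2$-step nilpotent when $\dim B'\ge4$ (its one-dimensional derived subalgebra is then central, since every $v\in B'$ annihilates a hyperbolic pair spanning it). In neither case is $\fr b$ finite-dimensional of class $\ge3$ — contradicting $c\ge3$.

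\medskip

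These cases are exhaustive, and the small-dimensional edge cases do not arise: $a=\dim A$ is even with $a+\dim B=p+1\ge2$, while $w_2=-2$ already forces $\fr g_2$ to be a quotient of $\Lambda^2A$ (so $a\ge2$) and $w_2=-4$ forces it to be a quotient of $\Lambda^2B$ (so $\dim B\ge2$). Hence $\Gamma\cong\pi_1(X)$ is impossible once $\dim\fr n\ge p+3$, as asserted. The step I expect to be the real obstacle is the correct deployment of Morgan's mixed Hodge structure: making the weight bound on the relation ideal pass cleanly through $\operatorname{gr}^W$ and restrict to the subalgebra $\fr b$, and keeping the weight filtration and the lower central series filtration in harmony; the accompanying purely Lie-theoretic input — the classification of nilpotent quadratic Lie algebras with one-dimensional derived subalgebra — is elementary but should be written out carefully.
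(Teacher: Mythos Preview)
Your approach is sound and runs largely parallel to the paper's, though organized differently. The paper works directly with the weight grading on $\fr{n}_{\bb C}$ (the total grading of the Deligne splitting), carrying out an explicit case analysis on how this grading can sit relative to the lower central series: a preliminary proposition rules out the cases $\fr n_{-1}=0$ and $\dim\fr n_{-2}=\dim H^1_2$ by cohomology dimension counts, and the remaining case (one element of $C^1\fr n$ in weight $-2$) is killed by a single Jacobi identity. Your case $w_2=-2$ is exactly that final Jacobi step, packaged more conceptually: once $[B,H_1]=0$, Jacobi gives $[B,\fr g_2]=0$, so $\fr g_3$ is a quotient of $A\otimes\fr g_2$, pure of odd weight $-3$, impossible in dimension one. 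This is cleaner than the paper's bookkeeping and avoids the preparatory case splits.

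Where you genuinely diverge is the case $w_2=-4$, which the paper handles by ad hoc $H^2$ computations, while you invoke a structural classification of quadratic Lie algebras with one-dimensional $\fr g_2$. Your flagged worry is well placed: the sentence ``the relation ideal of $\fr g$ is generated in weights $\ge -4$'' is not immediate, because Morgan's bound is on $H^2(\fr n)$, not on $H^2(\operatorname{gr}_{\mathrm{LCS}}\fr n)$, and restricting an ideal generated in bounded weight from $\mathbb L(H_1)$ down to $\mathbb L(B)$ does not obviously preserve the generation degree. The clean fix is to stay with $\fr n_{\bb C}$ rather than $\fr g$: from $w_2=-4$ one gets $[\fr n_{-1},\fr n_{-1}]=[\fr n_{-1},\fr n_{-2}]=0$, and then Jacobi shows inductively that $\fr n_{-1}$ is central and $\fr n_{\bb C}=\fr n_{-1}\times\fr b$ with $\fr b=\bigoplus_{k\ge1}\fr n_{-2k}$. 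K\"unneth then gives $H^2(\fr b)\subseteq H^2(\fr n_{\bb C})$, so condition \textbf{(W)} for $\fr n$ forces $H^2(\fr b)$ to sit in degree $4$ alone, i.e.\ $\fr b$ is quadratic over the generators $\fr n_{-2}$. After that your classification (radical of the alternating form, then Heisenberg or free-on-two) goes through exactly as you wrote. This route is arguably more conceptual than the paper's direct computations and yields the same conclusion; just be explicit that you are using \textbf{(W)} on $H^2(\fr n)$, not on $H^2(\fr g)$.
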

As a consequence, we conclude that if $\fr{n}$ is a filiform Lie algebra such that $\dim \fr{n}\geq 4$, then $\Gamma$ is not isomorphic to the fundamental group of any smooth complex algebraic variety. 
Some explicit examples are considered in this paper. For instance, the following shows such examples in \cref{filieg}.
\begin{eg}[\cref{filieg}]
For $n\geq 4$ and $m\geq 5$, the following groups are not isomorphic to the fundamental groups of smooth complex algebraic varieties.
\ali{
L_n&:=\left \langle x_1,\dots,x_n \middle\vert
\begin{array}{l}
[x_1, x_i]=x_{i+1}\ (1\leq i\leq n-1)\\
1=[x_1, x_n]=[x_j, x_k]\ (2\leq j<k\leq n)
\end{array}
\right \rangle \\
R_m&:= \left \langle x_1,\dots,x_m \middle\vert
\begin{array}{l}
[x_1, x_i]=x_{i+1}, [x_2, x_j]=x_{j+2}\\
(2\leq i\leq m-1, 3\leq j\leq m-2)\\
1=[x_1, x_m]=[x_2, x_{m-1}]=[x_2, x_m]=[x_k, x_l]\\
 (3\leq k<l\leq m)
\end{array}
\right \rangle \\
Q_{2m}&:= \left \langle x_1,\dots,x_m \middle\vert
\begin{array}{l}
[x_1, x_i]=x_{i+1}, [x_j, x_{m-j+1}]=x_m^{\varepsilon_j}\\
(2\leq i\leq m-1, 2\leq j\leq m-1,\varepsilon_j:=(-1)^{j+1})\\
1=[x_1, x_m]=[x_k, x_l]\ (2\leq k<l\leq m-1, k+l\neq m+1)
\end{array}
\right \rangle. 
}
\end{eg}
The main idea of proving the main theorem is an application of Morgan's mixed Hodge structures.
In \cite{Morgan}, Morgan constructs mixed Hodge structures on the Malcev completions of the fundamental groups of smooth complex algebraic varieties which are compatible with the canonical mixed Hodge structures constructed by Deligne in \cite{Del} by using rational homotopy theory. This shows the existence of certain gradings on the complexification $\frak{ n}_{\bb{C}}$ of the Lie algebra arising from a fundamental group of any smooth complex algebraic variety.
In case $\fr{n}$ is a $p$-filiform Lie algebra, we show that if such grading exists on $\frak{ n}_{\bb{C}}$ then  $\dim \fr{n}< p+3$.
Furthermore, in \cite{Morgan}, Morgan explains that the existence of his mixed Hodge structures can be criterion for finitely generated groups to be the fundamental groups of smooth complex algebraic varieties.
However, explicit classes of finitely generated torsion-free nilpotent groups which cannot be isomorphic to the fundamental groups have not been known so much.
We notice that Morgan's mixed Hodge structures are generalizations of the result in \cite{DGMS}.
Under the same setting as above, in case $M$ is projective, by the formality of a compact K\"ahler manifold in  \cite{DGMS}, we can say that the Lie algebra $\frak{ n}$ has a quadratic presentation {\it i.e.} It is the quotient of the free Lie algebra on its abelianization by an ideal generated in degree two. The existence of quadratic presentations on nilpotent Lie algebras are studied in \cite{CT}.

In \cref{5dim} and \cref{6dim}, we give a complete classification of nilpotent Lie algebras of dimension up to $6$ according to whether they admit gradings arising from Morgan's mixed Hodge structures. Eventually, we have the following theorem:
\begin{thm}[\cref{lowdim}]
Let $N$ be a simply connected nilpotent Lie group and $\fr{n}$ its Lie algebra such that $\dim\fr{n}\leq 6$. Let $\Gamma$ be a lattice of $N$. If $\fr{n}$ is isomorphic to one of the following Lie algebras:
\ali{
\text{filiform}&:L_{4,3}, L_{5,a}\ (a=6,7),\ L_{6,b}\ (14\leq b\leq 18)\\
\text{$2$-filiform}&:L_{4,3}\o\bb{C},\ L_{5,5},\ L_{5,a}\o\bb{C},\ L_{6,c}\ (11\leq c\leq 13)\\
\text{$3$-filiform}&:L_{4,3}\o\bb{C}^2,\ L_{5,5}\o\bb{C},\ L_{6,10}\\
\text{otherwise}&:L_{5,8},\ L_{6,19}(-1),\ L_{6,20},\ L_{6,23},\ L_{6,d}\ (25\leq d\leq 28)
}
then $\Gamma$ is not isomorphic to the fundamental group of any smooth complex algebraic variety. The symbol $L_{\bullet ,\bullet}$ is used in the list of low-dimensional nilpotent Lie algebras (see \cref{6dim}). 
\end{thm}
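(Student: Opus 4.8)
The plan is to reduce the statement, via Morgan's theorem, to the assertion that the complexification of each algebra in the list carries no grading of the type produced by Morgan's mixed Hodge structures, and then to split the list into two blocks: the $p$-filiform entries, handled by \cref{Cor503}, and the ``otherwise'' entries, handled by the explicit analysis of \cref{5dim} and \cref{6dim}. Concretely, if $\Gamma\cong\pi_1(M)$ for a smooth complex algebraic variety $M$, then the complexified Malcev Lie algebra $\fr{n}_{\bb{C}}$ carries a mixed Hodge structure whose weight filtration yields, after negating the weights, a grading $\fr{n}_{\bb{C}}=\bo_{k\geq1}\fr{n}_k$ with $[\fr{n}_i,\fr{n}_j]\subseteq\fr{n}_{i+j}$ such that: $\fr{n}_1\o\fr{n}_2$ generates $\fr{n}_{\bb{C}}$; realizing $\fr{n}_{\bb{C}}$ as the quotient of the free Lie algebra on $\fr{n}_1\o\fr{n}_2$ by a graded ideal $R$, the ideal $R$ is generated in weights $\leq 4$; each summand $\fr{n}_k$ with $k$ odd is even-dimensional; and the grading underlies a Hodge structure compatible with the bracket, with the weight-two part of $\fr{n}_{\bb{C}}/[\fr{n}_{\bb{C}},\fr{n}_{\bb{C}}]$ of type $(1,1)$. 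So it is enough to rule out such gradings on each listed algebra.

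For the ``filiform'', ``$2$-filiform'' and ``$3$-filiform'' blocks this is immediate from \cref{Cor503} once one records that each entry is $p$-filiform with $\dim\fr{n}\geq p+3$: thus $L_{4,3}$, $L_{4,3}\o\bb{C}$, $L_{4,3}\o\bb{C}^2$ are $1$-, $2$-, $3$-filiform of dimensions $4=1+3$, $5=2+3$, $6=3+3$; $L_{5,5}$ is $2$-filiform of dimension $5=2+3$ and $L_{5,5}\o\bb{C}$, $L_{6,10}$ are $3$-filiform of dimension $6=3+3$; the filiform $L_{5,a}$ $(a=6,7)$ and $L_{6,b}$ $(14\leq b\leq 18)$ have $p=1$, dimension $\geq 4$; and $L_{5,a}\o\bb{C}$ $(a=6,7)$, $L_{6,c}$ $(11\leq c\leq 13)$ are $2$-filiform of dimension $6\geq5$. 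In each case \cref{Cor503} applies directly.

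The substance of the theorem is the ``otherwise'' block $L_{5,8}$, $L_{6,19}(-1)$, $L_{6,20}$, $L_{6,23}$, $L_{6,d}$ $(25\leq d\leq28)$, none of which is $p$-filiform, so \cref{Cor503} does not cover them; here one proceeds directly, as in \cref{5dim} and \cref{6dim}. The key observation is that $\fr{n}_{\bb{C}}$ has only finitely many positive gradings that are compatible with the bracket and generated in weights $1$ and $2$: such a grading is fixed by the weights assigned to a generating set, and the defining brackets of $\fr{n}$ force these weights to satisfy a system of linear equations, so the possibilities can be enumerated. One then checks that every candidate grading violates at least one of the constraints above — some odd-weight summand is forced to be odd-dimensional (precluding a Hodge structure of that weight on it); or, presenting $\fr{n}_{\bb{C}}$ over the free Lie algebra on $\fr{n}_1\o\fr{n}_2$, a relation of weight $\geq5$ is unavoidable; or the Hodge-type bookkeeping — the weight-two part of the abelianization being of type $(1,1)$ together with the bracket preserving bidegrees — clashes with the structure constants. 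Since each of these algebras admits only a handful of candidates, the verification is finite, and carrying it out shows that none of them admits a grading arising from Morgan's mixed Hodge structure; hence $\Gamma$ is not the fundamental group of any smooth complex algebraic variety.

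The main obstacle is expected to lie in the exhaustiveness of this last block. First, one must be careful to include the degenerate gradings, notably those with $\fr{n}_1=0$ — all of the abelianization placed in weight two — because these frequently survive the parity test and have to be eliminated instead by the weight-$\leq4$ condition on $R$ or by the Hodge-type condition; omitting them would leave a hole in the argument. Second, the one-parameter family $L_{6,19}(\lambda)$ is delicate, since generic values of $\lambda$ do admit a Morgan grading, so one must pinpoint exactly the degeneration at $\lambda=-1$ that obstructs it. Finally, the theorem is only the negative half of the classification carried out in \cref{5dim} and \cref{6dim}; that the displayed list is exactly the set of obstructed algebras — and not merely contained in it — rests on the complementary verification, also done there, that every remaining nilpotent Lie algebra of dimension at most six does admit a grading arising from Morgan's mixed Hodge structures, though only the negative direction is needed for the present statement.
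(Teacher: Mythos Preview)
Your proposal is correct and follows essentially the same route as the paper: reduce via \cref{fundamentallattice} to the non-existence of a Morgan grading, dispatch the $p$-filiform blocks by \cref{Cor503}, and handle the ``otherwise'' block by the finite case-analysis of \cref{5dim} and \cref{6dim}. One small point: the paper carries out that last block using only the total-grading conditions {\bf (W)} and {\bf (H)} (generation in weights $1,2$; relations in weights $\leq 4$; even dimension in odd weight), and never needs your third ``Hodge-type bookkeeping'' constraint on bidegrees---so while that extra tool is legitimate, it is superfluous here.
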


\subsection{Acknowledgements}
The author thanks to his supervisor Hisashi Kasuya for great discussions, valuable comments, introducing the Morgan's paper \cite{Morgan} to him, advising on the writing of the paper and pointing out the application of \cref{pfiliformMMHS} and thanks to his laboratory members: Valto Purho for letting him know the Raghunathan's text book \cite{Raghu} and Takashi Ono, Silveira Gomes Lucas Henrique for great discussions and helpful comments. This work was supported by JST SPRING, Grant Number JPMJSP2138. 

\section{Preliminaries and notations}
In this paper, we denote $\bb{K}$ by a field of characteristic $0.$
\subsection{$p$-filiform Lie algebras}
\begin{definition}
For an integer $p\geq 1$, an $m$-dimensional nilpotent Lie algebra $\fr{n}$ is said to be \textit{$p$-filiform} Lie algebra if 
\ali{\dim C^i\fr{n}=m-i-p.}
In particular, the $1$-filiform Lie algebras are called the \textit{filiform} Lie algebras.
\end{definition}
Note that the $m$-dimensional $p$-filiform Lie algebras are $(m-p)$-step nilpotent and $(m-1)$-filiform Lie algebras are abelian.
\begin{eg}\label{Heisenberg}
The Heisenberg Lie algebra $\fr{n}_3(\bb{K})$ is filiform because there is a basis $X_1,X_2$ and $X_3$ satisfying $[X_1,X_2]=X_3,[X_1,X_3]=0=[X_2,X_3]$ and $C^1\fr{n}_3(\bb{K})=\ang{X_3}, C^2\fr{n}_3(\bb{K})=\0.$
\end{eg}
In \cite{pfiliform}, there is a classification of the $m$-dimensional $(m-2)$ and $(m-3)$-filiform Lie algebra over $\bb{C}.$ From now on, if the Lie brackets are not explicitly given, they are understood to be zero.
\begin{eg}[\cite{pfiliform}]\label{m-2fili}
Let $q$ be an integer such that $\frac{m-3}{2}\leq q<\frac{m-1}{2}$. In dimension $m$, $m\geq 3$, there are $q$ pairwise non-isomorphic Lie algebras, denoted by $\fr{n}_m^q=\ang{X_0,X_1,X_2,Y_1,Y_2,\dots ,Y_{m-3}}$ whose laws are given by
\ali{&[X_0,X_1]=X_2\\
&[Y_{2k-1},Y_{2k}]=X_2\ \text{ for }1\leq k\leq q}.
\end{eg}
\begin{eg}[\cite{pfiliform}]\label{m-3fili}
In dimension $m$, $m\geq 5$, there are exactly $m-2$ pairwise non-isomorphic Lie algebras, denoted by $\fr{n}_m^{\bullet}=\ang{X_0,X_1,X_2,X_3,Y_1,\dots ,Y_{m-4}}$ whose laws are given by
\ali{\fr{n}_m^{2q-1}:\ &[X_0,X_i]=X_{i+1}\text{ for }i=1,2\ \ \ \ \text{ }1\leq q< \frac{m-2}{2}\\
&[Y_{2k-1},Y_{2k}]=X_3\ \text{ for }1\leq k\leq q\\\\
\fr{n}_m^{2q}:\ &[X_0,X_i]=X_{i+1}\text{ for }i=1,2\ \ \ \ \text{ }1\leq q< \frac{m-3}{2}\\
&[X_1,Y_{m-4}]=X_3\\
&[Y_{2k-1},Y_{2k}]=X_3\ \text{ for }1\leq k\leq q-1\\\\
\fr{n}_m^{m-2}:\ &[X_0,X_i]=X_{i+1}\text{ for }i=1,2\\
&[X_1,X_2]=Y_{m-4}}
\end{eg}
\subsection{Lie algebra cohomology}
The cohomology of a Lie algebra $\fr{g}$ is defined by the dual of the Lie bracket $d:=[,]^*:\fr{g}^*\rightarrow \fr{g}^*\w\fr{g}^*.$ We extend $d$ as the linear operator on the exterior algebra $\bw^{\bullet}\fr{g}$. Then we have $d\circ d=0$ and $d$ satisfies the Leibnitz rule. Thus $(\bw^{\bullet}\fr{g},d)$ is cochain complex. We call the cohomology of the complex $H^{\bullet}(\fr{g})=\Ker{d}/\Im{d}$ the cohomology of $\fr{g}.$
\begin{eg}\label{cohoeg}
Let us compute the first and second cohomologies of $(m-2)$-filiform Lie algebra $\fr{n}_6^2$. We take a basis as in \cref{m-2fili}, $\fr{n}_5^2=\ang{X_0,X_1,X_2,Y_1,Y_2}.$ Let $x_0,x_1,x_2,y_1,y_2$ be the dual basis. Then we have $[X_0,X_1]=X_2=[Y_1,Y_2],$  
$dx_0=0=dx_1=dy_{1}=dy_2$
and $dx_2=-x_0\w x_1-y_1\w y_2.$
Thus we get the first cohomology 
\ali{H^1(\fr{n}_5^2)=\ang{[x_0],[x_1],[y_1],[y_2]}.}
We compute the second cohomology. For $i=0,1$, $j=1,2$, we have $d(x_0\w x_1)=0=d(y_1\w y_2)=d(x_i\w y_j).$ By the Leibnitz rule, we have
\ali{&d(x_i\w x_2)=-x_i\w dx_2=x_i\w y_{1}\w y_{2}\\
&d(y_j\w x_2)=-y_j\w dx_2=y_j\w (x_0\w x_1)=x_0\w x_1\w y_j}
for i=$0$ or $1$ and $j=1,2.$ Thus it follows that 
\ali{H^2(\fr{n}_m^q)=\frac{\Ker{(d:\bw^2(\fr{n}_m^q)^*\rightarrow \bw^3(\fr{n}_m^q)^*)}}{\Im{(d:(\fr{n}_m^q)^*\rightarrow \bw^2(\fr{n}_m^q)^*)}}=\ang{[x_0\w x_1],[x_0\w y_1],[x_0\w y_2],[x_1\w y_1],[x_1\w y_2]}.}
\end{eg}
\subsection{Graded Lie algebras}
\begin{definition}\label{graded}
Let $\fr{g}$ be a $\bb{K}$-Lie algebra. A grading of $\fr{g}$ is a direct sum decomposition $\fr{g}=\dis{\bo_{i\in\bb{Z}}\fr{g}_i}$ such that 
\ali{[\fr{g}_i,\fr{g}_j]\subset \fr{g}_{i+j}\ \ {\rm for \ all}\ i,j\in\bb{Z}.} 
We call a Lie algebra equipped with a grading ``graded Lie algebra''. If a grading is indexed by natural( or negative) numbers, then the grading is said to be positive( or negative). A length of a grading $\fr{g}=\dis{\bo_{i\in\bb{Z}}\fr{g}_i}$ is defined by cardinal of a set 
\ali{\{ i\in\bb{Z}\mid \fr{g}_i\neq\0\}.} 
\end{definition}
By \cref{graded}, we see that finite-dimensional positive (or negative) graded Lie algebras are nilpotent.
\begin{lem}\label{central}
Let $\fr{n}$ be a Lie algebra and $\fr{n}=\fr{n}_{i_1}\o\cdots\o\fr{n}_{i_l}$ a negative grading. Then, for all $k\geq 1$, 
\ali{C^k\fr{n}\subset\dis{\bo_{i<i_k}\fr{n}_i}.} 
\end{lem}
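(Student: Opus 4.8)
The plan is to argue by induction on $k$, exploiting that a negative grading drops the degree by at least one unit under each Lie bracket. Since the grading is negative I enumerate the nonzero degrees in decreasing order $i_1>i_2>\cdots>i_l$, so that every one of them satisfies $i_a\leq i_1\leq -1$. The whole argument rests on the defining property $[\fr{n}_i,\fr{n}_j]\subset\fr{n}_{i+j}$ of a grading together with this strict negativity.

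For the base case $k=1$, I would write $C^1\fr{n}=[\fr{n},\fr{n}]=\sum_{a,b}[\fr{n}_{i_a},\fr{n}_{i_b}]$; since $[\fr{n}_{i_a},\fr{n}_{i_b}]\subset\fr{n}_{i_a+i_b}$ and $i_a+i_b\leq 2i_1<i_1$, each summand lies in $\bo_{i<i_1}\fr{n}_i$, which is exactly the assertion for $k=1$.

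For the inductive step, suppose $C^{k-1}\fr{n}\subset\bo_{j<i_{k-1}}\fr{n}_j$ with $k\geq 2$. Writing $C^k\fr{n}=[\fr{n},C^{k-1}\fr{n}]$ and using bilinearity of the bracket, it is enough to bound $[X,Y]$ for $X\in\fr{n}_{i_a}$ and $Y$ homogeneous of degree $j$ with $j<i_{k-1}$. Any such $j$ lying in the support $\{i_1,\dots,i_l\}$ satisfies $j\leq i_k$, since $i_k$ is the largest element of the support strictly below $i_{k-1}$; hence $[X,Y]\in\fr{n}_{i_a+j}$ with $i_a+j\leq i_1+i_k<i_k$ because $i_1<0$. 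This gives $C^k\fr{n}\subset\bo_{i<i_k}\fr{n}_i$ and closes the induction.

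I do not anticipate a genuine obstacle; the statement is in essence a bookkeeping lemma about degrees. The one place where the hypothesis is really used is the observation that if $j<i_{k-1}$ and $j$ belongs to the support then $j\leq i_k$, and that $i_1\leq -1$ then forces $i_a+j$ to be strictly below $i_k$; this is the step that must be stated with care. The remaining subtlety is purely notational: one should fix the decreasing-order convention for $i_1,\dots,i_l$ at the outset and note that for $k>l$ the symbol $i_k$ is undefined, but then the case $k=l$ already yields $C^l\fr{n}\subset\bo_{i<i_l}\fr{n}_i=\0$, so $C^k\fr{n}=\0$ and the inclusion holds vacuously.
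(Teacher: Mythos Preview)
Your proof is correct and follows essentially the same induction as the paper: both argue the base case via $i_a+i_b\leq 2i_1<i_1$ and the inductive step by noting that any support degree $j<i_{k-1}$ satisfies $j\leq i_k$, whence $i_a+j\leq i_1+i_k<i_k$. You are in fact slightly more careful than the paper in two places: you make the decreasing-order convention $i_1>i_2>\cdots>i_l$ explicit, and you handle the case $k>l$ (where $i_k$ is undefined) by observing that $C^l\fr{n}=\0$ already, a point the paper leaves implicit.
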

\begin{proof}
For $k=1$, by the definition of graded Lie algebra, we see that
\ali{C^1\fr{n}&=[\fr{n}_{i_1}\o\cdots\o\fr{n}_{i_l},\fr{n}_{i_1}\o\cdots\o\fr{n}_{i_l}]\subset \dis{\bo_{i<2i_1}\fr{n}_i}\subset \dis{\bo_{i<i_1}\fr{n}_i}.} 
We assume that $C^{j}\fr{n}\subset\dis{\bo_{i<i_{j}}\fr{n}_i}$ for $j=1,2,\dots k-1$. Thus, by the definition of the lower central series, we have
\ali{C^k\fr{n}&=[\fr{n}, C^{k-1}\fr{n}].}
By assumption and the definition of the graded Lie algebra, it follows that
\ali{[\fr{n}, C^{k-1}\fr{n}]&\subset [\fr{n}_{i_1}\o\cdots\o\fr{n}_{i_l},\dis{\bo_{i<i_{k-1}}\fr{n}_i} ]=[\fr{n}_{i_1}\o\cdots\o\fr{n}_{i_l},\fr{n}_{i_k}\o\cdots\o\fr{n}_{i_l}]\subset \dis{\bo_{i\leq i_{k}+i_1}\fr{n}_i}\subset \dis{\bo_{i<i_{k}}\fr{n}_i}
}
\end{proof}
\begin{lem}\label{gradingstep}
A length of a negative grading on a $k$-step nilpotent Lie algebras is greater or equal to than $k$. 
\end{lem}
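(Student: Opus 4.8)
The plan is to read off the conclusion directly from \cref{central}. Write the given negative grading as $\fr{n}=\fr{n}_{i_1}\o\cdots\o\fr{n}_{i_l}$ with $i_1>i_2>\cdots>i_l$ and each $\fr{n}_{i_j}\neq\0$, so that by \cref{graded} the length of this grading is exactly $l$. Assuming $\fr{n}$ has nilpotency class exactly $k$, that is $C^{k-1}\fr{n}\neq\0$ and $C^k\fr{n}=\0$, the goal is to show $l\geq k$.

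First I would apply \cref{central} with the index $l$: it yields $C^l\fr{n}\subset\dis{\bo_{i<i_l}\fr{n}_i}$. Since $i_l$ is the smallest index occurring in the decomposition, there is no homogeneous component $\fr{n}_i$ with $i<i_l$, so $\dis{\bo_{i<i_l}\fr{n}_i}=\0$ and therefore $C^l\fr{n}=\0$.

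Finally, I would combine $C^l\fr{n}=\0$ with the fact that the lower central series is decreasing. If the nilpotency class of $\fr{n}$ is $k$, then $C^{k-1}\fr{n}\neq\0$, which together with $C^l\fr{n}=\0$ forces $k-1<l$, i.e. $k\leq l$. This is precisely the claim that the length of the negative grading is at least $k$.

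There is essentially no obstacle beyond bookkeeping here, since the substance has already been isolated in \cref{central}; the only points requiring care are the convention that the indices are listed in strictly decreasing order $i_1>\cdots>i_l$ (so that the condition $i<i_l$ is genuinely vacuous) and the precise meaning of ``$k$-step nilpotent'' in terms of $C^{k-1}\fr{n}$ and $C^k\fr{n}$. If one wanted an argument not citing \cref{central}, I would instead establish by induction on $j$ the inclusion $C^j\fr{n}\subset\fr{n}_{i_{j+1}}\o\cdots\o\fr{n}_{i_l}$ (empty, hence $\0$, once $j\geq l$) — the same computation as in the proof of \cref{central} — and then specialize to $j=l$.
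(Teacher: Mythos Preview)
Your proposal is correct and follows essentially the same route as the paper: both apply \cref{central} at the index $l$ to obtain $C^l\fr{n}\subset\bo_{i<i_l}\fr{n}_i=\0$, and then compare with the nilpotency class. The paper phrases this as a contradiction (assuming $l<k$ so that $C^l\fr{n}\neq\0$), whereas you argue directly using the monotonicity of the lower central series; the content is identical.
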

\begin{proof}
Let $\fr{n}$ be a $k$-step nilpotent Lie algebra and $\fr{n}=\fr{n}_{i_1}\o\cdots\o\fr{n}_{i_l}$ a negative grading. We prove $l\geq k$. We assume that $l<k$. Note that $C^l\fr{n}\neq\0$ because $\fr{n}$ is $k$-step and $l\leq k-1$. By \cref{central}, we have
\ali{\0 \neq C^l\fr{n}\subset \dis{\bo_{i<i_{l}}\fr{n}_i}=\0.
}
This is a contradiction. Therefore $l\geq k$ holds.
\end{proof}
\begin{rem}
A graded Lie algebra $\fr{g}=\bo_{i\in \bb{Z}}\fr{g}_i$ induces the grading on $\bw^j\fr{g}^*=\bo_{k\in \bb{Z}}C_k^j$ where $C_k^j:=\bo_{i_1+\cdots +i_j=k}\fr{g}_{-i_1}^*\w\cdots \w \fr{g}_{-i_j}^*$, and $\fr{g}_{-i_1}^*\w\cdots \w \fr{g}_{-i_j}^*$ is defined by a subvector space generated by the set $\{ f_{i_1}\w\cdots \w f_{i_j}\mid f_{i_l}\in \fr{g}_{-i_l}^*\}.$ Then we see that $d(C_k^j) \subset C_k^{j+1}.$ Thus the graded Lie algebra induces the grading on the cohomology $H^j(\fr{g})=\bo_{i\in\bb{Z}}H_k^j$ where $H_k^j:=\dis{\frac{\Ker{(d:C_k^j\rightarrow C_k^{j+1})}}{\Im{(d:C_k^{j-1}\rightarrow C_k^{j})}}}.$
\end{rem}
Given a basis for each homogeneous component of a graded vector space, for instance, we denote the degree $i$ component, whose basis consists of $X_1\dots X_n$ by $\ang{X_1,\dots ,X_n}_i.$ 
\begin{eg}\label{n-2fili}
We define a grading on $\fr{n}_5^2=\ang{X_0,X_1,X_2,Y_1,Y_2}$ in \cref{cohoeg} to be 
\ali{\fr{n}_5^2=\ang{X_0,X_1,Y_1,Y_2}_{-1}\o\ang{X_2}_{-2}.}
Let $x_0,x_1,x_2,y_1$ and $y_2$ are the dual basis. Then the gradings on the first and second cohomologies are
\ali{H^1(\fr{n}_5^2)=H_1^1=\ang{[x_0],[x_1],[y_1],[y_2]}_1}
and
\ali{H^2(\fr{n}_5^2)=H_2^2=\ang{[x_0\w x_1],[x_0\w y_1],[x_0\w y_2],[x_1\w y_1],[x_1\w y_2]}_2.}
\end{eg}
\section{Mixed Hodge structures}\label{6}
This section is based on \cite{Morgan}.
\begin{definition}
An $\bb{R}$-Hodge filtration of weight $n$ on an $\bb{R}$-vector space $V$ is a finite decreasing filtration $\{ F^p(V_{\bb{C}})\}_{p\in\bb{Z}}$ on $V_{\bb{C}}:=V\otimes \bb{C}$ such that
\ali{V_{\bb{C}}=F^p(V_{\bb{C}})\o \overline{F^{n+1-p}(V_{\bb{C}})}}
for all integer $p$.
\end{definition}
\begin{definition}
A pair $(V,W,F)$ is an $\bb{R}$-mixed Hodge structure on an $\bb{R}$-vector space $V$ if the following conditions hold:\\
\indent
$(1)$ $W=\{W_k(V)\}_{k\in\bb{Z}}$ is an increasing filtration on $V$ bounded below.\\
\indent
$(2)$ $F=\{ F^p(V_{\bb{C}})\}_{p\in\bb{Z}}$ is a finite decreasing filtration on $V_{\bb{C}}.$\\
\indent
$(3)\label{Hodge}$ The decreasing filtration on $Gr_k^W(V_{\bb{C}}):=\dis{W_k(V)\over W_{k-1}(V)} \otimes \bb{C}$, \ $F^p(Gr_k^W(V_{\bb{C}})):=\dis{\frac{F^p(V_{\bb{C}})\cap (W_k(V)\otimes \bb{C})}{W_{k-1}(V)\otimes\bb{C}}}$\\
\hspace{1cm}is an $\bb{R}$-Hodge filtration of weight $k$ on $\dis{Gr_k^W(V):=\frac{W_k(V) }{W_{k-1}(V)}}$ for all integer $k$. 
\end{definition}
\begin{prop}{\cite{Morgan}}\label{MHS}
Let $V$ be an $\bb{R}$-vector space. If $V$ admits an $\bb{R}$-mixed Hodge structure $(V,W,F)$, then $V_{\bb{C}}$ admits a bigrading $V_{\bb{C}}=\dis{\bo_{p,q\in\bb{Z}} V^{p,q}}$ such that 
\ali{\overline{V^{p,q}}=V^{q,p}\ {\rm mod}\ \dis{\bo_{s+t<p+q}V^{s,t}},}
\ali{W_i(V_{\bb{C}})=\dis{\bo_{p+q\leq i}V^{p,q}},\ {\rm and}\ F^i(V_{\bb{C}})=\dis{\bo_{p\geq i, q\in\bb{Z}}V^{p,q}}.} 
\end{prop}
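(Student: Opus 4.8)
\emph{Strategy.} The statement is Deligne's canonical bigrading of a mixed Hodge structure, so the plan is to write the subspaces $V^{p,q}$ down explicitly and verify the three asserted properties by induction on the number $\ell$ of integers $k$ with $Gr_k^W(V)\neq\0$. When $\ell\leq 1$ the structure is a pure $\bb{R}$-Hodge structure of some weight $k$, and condition $(3)$ says precisely that $F^p(V_{\bb{C}})$ and $\overline{F^{k+1-p}(V_{\bb{C}})}$ are complementary; I would then set $V^{p,q}:=F^p(V_{\bb{C}})\cap\overline{F^q(V_{\bb{C}})}$ for $p+q=k$ and $V^{p,q}:=\0$ otherwise, and check that $V_{\bb{C}}=\bo_{p+q=k}V^{p,q}$ with $\overline{V^{p,q}}=V^{q,p}$ exactly, while $W_i(V_{\bb{C}})=\bo_{p+q\leq i}V^{p,q}$ and $F^i(V_{\bb{C}})=\bo_{p\geq i}V^{p,q}$ are immediate from the same complementarity. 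This classical Hodge decomposition is the base case.

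\emph{Inductive step.} For $\ell\geq 2$ I would peel off the bottom weight: if $k_0$ is the smallest weight appearing, then $V'':=W_{k_0}(V)$ is a sub-$\bb{R}$-mixed Hodge structure, pure of weight $k_0$ by condition $(3)$ at $k_0$, and $V':=V/V''$ inherits an $\bb{R}$-mixed Hodge structure whose weight filtration has length $\ell-1$. Applying the base case to $V''$ and the induction hypothesis to $V'$ produces bigradings $V''_{\bb{C}}=\bo V''^{p,q}$ and $V'_{\bb{C}}=\bo V'^{p,q}$ with all the asserted properties; the task is then to splice them into a bigrading of $V_{\bb{C}}$. The plan here is to use Deligne's correction terms, defining
\ali{V^{p,q}:=F^p(V_{\bb{C}})\cap\Bigl(\overline{F^q(V_{\bb{C}})\cap W_{p+q}(V_{\bb{C}})}+\dis{\sum_{i\geq 2}\overline{F^{q-i+1}(V_{\bb{C}})\cap W_{p+q-i}(V_{\bb{C}})}}\Bigr),}
and then to verify: (i) the projection $V_{\bb{C}}\to V'_{\bb{C}}$ carries $V^{p,q}$ isomorphically onto $V'^{p,q}$ when $p+q>k_0$ and $V''^{p,q}\subset V^{p,q}$ when $p+q=k_0$, so that $V_{\bb{C}}=\bo_{p,q}V^{p,q}$; (ii) $W_i(V_{\bb{C}})=\bo_{p+q\leq i}V^{p,q}$, using $V^{p,q}\subset W_{p+q}(V_{\bb{C}})$ together with the two solved cases; (iii) $F^i(V_{\bb{C}})=\bo_{p\geq i}V^{p,q}$, using $V^{p,q}\subset F^p(V_{\bb{C}})$ and the strictness of $F$ along $0\to V''\to V\to V'\to 0$; and (iv) the congruence $\overline{V^{p,q}}\equiv V^{q,p}$ modulo $\bo_{s+t<p+q}V^{s,t}$, which should fall out of the shape of the correction terms, since conjugating the defining expression lands in $F^q(V_{\bb{C}})$ up to summands lying in $W_{p+q-1}(V_{\bb{C}})$.

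\emph{Main obstacle.} I expect the real content to be in $(iv)$ together with the exactness claim in $(i)$: an \emph{arbitrary} lift of the pure Hodge decompositions of the graded pieces $Gr_k^W(V)$ back to $V_{\bb{C}}$ is compatible with neither $F$ nor complex conjugation, and the whole point is that Deligne's specific terms $\sum_{i\geq 2}\overline{F^{q-i+1}(V_{\bb{C}})\cap W_{p+q-i}(V_{\bb{C}})}$ simultaneously keep $V^{p,q}$ inside $F^p(V_{\bb{C}})\cap W_{p+q}(V_{\bb{C}})$, make the $V^{p,q}$ both span $V_{\bb{C}}$ and be linearly independent, and pin down $\overline{V^{p,q}}$ only up to strictly lower weight. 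This is a linear-algebra computation with two opposed filtrations, which I would organize by inducting down the weight filtration, repeatedly invoking the opposedness $Gr_k^W(V_{\bb{C}})=F^p(Gr_k^W(V_{\bb{C}}))\o\overline{F^{k+1-p}(Gr_k^W(V_{\bb{C}}))}$ from condition $(3)$ to lift one weight step at a time. A secondary, more bookkeeping-heavy point is confirming that $V/V''$ really is an $\bb{R}$-mixed Hodge structure with the expected graded pieces, which is where one uses that $\bb{R}$-mixed Hodge structures form an abelian category with strict morphisms.
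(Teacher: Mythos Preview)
The paper does not actually prove this proposition: it is stated with a citation to Morgan and then used as a black box, with no argument given in the text. Your plan is precisely Deligne's construction of the canonical bigrading (the formula you wrote is the one recorded in Morgan's paper and in Deligne's \textit{Th\'eorie de Hodge}), so there is nothing in the paper to compare against beyond the reference itself; your outline would supply a complete proof where the paper offers none.
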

Let $(V,W,F)$ be an $\bb{R}$-mixed Hodge structure and $V_{\bb{C}}=\dis{\bo_{p,q\in\bb{Z}} V^{p,q}}$ the bigrading in \cref{MHS}. Let $V_i:=\dis{\bo_{p+q=i}V^{p,q}}$. We obtain the grading $V_{\bb{C}}=\dis{\bo_{i\in\bb{Z}}V_i}$ such that $\dim V_i$ is even for odd integer $i$.

\begin{eg}[\cite{Del}]\label{Deligne}
Let $M$ be a any smooth complex algebraic variety.
The $k$-th real cohomology $ H^{k}(M,\bb{R})$ admits a canonical $\bb{R}$-mixed Hodge structure.
For  the  bigrading $H^k(M,\bb{R})_{ \bb{C}}=\dis{\bo_{i\geq 1}H_{p,q}^k}$ induced by such mixed Hodge structure as in Proposition \ref{MHS}, we have $H^1(M,\bb{R})_{ \bb{C}}=H_{1,0}^1\oplus H_{0,1}^1\oplus H_{1,1}^1$ and $H^2(M,\bb{R})_{ \bb{C}}=H_{2,0}^2\oplus H_{1,1}^2\oplus  H_{0,2}^2\oplus H_{2,1}^2\oplus H_{1,2}^2\oplus  H_{2,2}^2$.

\end{eg}

\subsection{Morgan's mixed Hodge structures}
Let $M$ be a any smooth complex algebraic variety and $x\in M$ based point. 
We assume that  the fundamental group $\pi_{1}(M,x)$ is a lattice in a simply connected nilpotent Lie group $N$.
In this case, the Lie algebra $\frak n$ of $N$ is the Lie algebra of  the Malcev completion of $\pi_1(M,x)$ and the differential graded algebra $\bigwedge^{\bullet} \frak n^{*}$ is the $1$-minimal model of $M$  in the sense of Sullivan (\cite{Sul}).
Thus, we have an isomorphism $H^{1}({\frak n})\cong H^{1}(M,\bb{R})$ and an injection $H^{2}({\frak n})\hookrightarrow H^{2}(M,\bb{R})$.
In \cite{Morgan}, Morgan shows the existence of  a mixed Hodge structure of the Lie algebra of  the Malcev completion of the fundamental group of a any smooth complex algebraic variety which is an extension of the canonical mixed Hodge structure as Example \ref{Deligne}.
In our assumption, we can say that  the Lie algebra $\frak n$ admits an $\bb{R}$-mixed Hodge structure such that 
the induced  $\bb{R}$-mixed Hodge structure on $H^{1}({\frak n})$ is isomorphic to the canonical $\bb{R}$-mixed Hodge structure on 
$ H^{1}(M,\bb{R})$ and  the induced  $\bb{R}$-mixed Hodge structure on $H^{2}({\frak n})$ is a $\bb{R}$-mixed Hodge substructure of  the canonical $\bb{R}$-mixed Hodge structure on 
$ H^{2}(M,\bb{R})$.
Taking the total grading of the bigrading   induced by such mixed Hodge structure as in Proposition \ref{MHS}, we obtain the following statement.

\begin{thm}[\cite{Morgan}]\label{WH}
Let $M$ be a any smooth complex algebraic variety, $x\in M$ based point.
 We assume that  the fundamental group $\pi_{1}(M,x)$ is a lattice in a simply connected nilpotent Lie group $N$.
 Then there exists a grading $\fr{n}_{\bb{C}}=\dis{\bo_{i\leq -1}\fr{n}_i}$ such that:
\\\ \indent
{\bf (W):}\label{W} For the induced grading $H^*(\fr{n}_{\bb{C}})=\dis{\bo_{i\geq 1}H_i^*}$, we have
\ali{H^1(\fr{n}_{\bb{C}})=H_1^1\o H_2^1}
\\\ \indent
and
\ali{H^2(\fr{n}_{\bb{C}})=H_2^2\o H_3^2\o H_4^2.} 
\\\ \indent
{\bf (H):}\label{H} For any odd integer $k$, $\dim{\fr{n}_k}$ and $\dim{H_k^j}$ are even. 
\end{thm}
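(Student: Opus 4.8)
The plan is to combine Morgan's construction of a mixed Hodge structure on the Malcev Lie algebra with \cref{MHS} and the weights recorded in \cref{Deligne}. Under our hypotheses $\bigwedge^{\bullet}\fr{n}^{*}$ is the $1$-minimal model of $M$, so by \cite{Morgan} the real vector space $\fr{n}$ carries an $\bb{R}$-mixed Hodge structure $(\fr{n},W,F)$ whose Lie bracket is a morphism of mixed Hodge structures, and for which the induced structure on $H^{1}(\fr{n})$ is the canonical one on $H^{1}(M,\bb{R})$ and the one on $H^{2}(\fr{n})$ is a mixed Hodge substructure of the canonical one on $H^{2}(M,\bb{R})$. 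Applying \cref{MHS} gives a bigrading $\fr{n}_{\bb{C}}=\bigoplus_{p,q}V^{p,q}$; set $\fr{n}_{i}:=\bigoplus_{p+q=i}V^{p,q}$, so that $W_{i}(\fr{n}_{\bb{C}})=\bigoplus_{j\le i}\fr{n}_{j}$. Since the bracket is a morphism of mixed Hodge structures it respects $F$ and its conjugate, hence the Deligne splitting, so $[V^{p,q},V^{p',q'}]\subset V^{p+p',q+q'}$ and therefore $[\fr{n}_{i},\fr{n}_{j}]\subset\fr{n}_{i+j}$: the total grading $\fr{n}_{\bb{C}}=\bigoplus_{i}\fr{n}_{i}$ is a grading of the Lie algebra in the sense of \cref{graded}.

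Next I would pin down the degrees and establish (W). The dual of the mixed Hodge structure on $\fr{n}/[\fr{n},\fr{n}]$ is the one on $H^{1}(\fr{n})\cong H^{1}(M,\bb{R})$, whose weights are $1$ and $2$ by \cref{Deligne}; hence $\fr{n}/[\fr{n},\fr{n}]$ sits in weights $-1,-2$. As $\fr{n}$ is nilpotent it is generated as a Lie algebra by a lift of its abelianization, and the bracket adds weights, so every weight of $\fr{n}$ is $\le -1$, giving $\fr{n}_{\bb{C}}=\bigoplus_{i\le -1}\fr{n}_{i}$ and, dually, $H^{1}(\fr{n}_{\bb{C}})=H_{1}^{1}\oplus H_{2}^{1}$. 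For $H^{2}$, the injection $H^{2}(\fr{n})\hookrightarrow H^{2}(M,\bb{R})$ of mixed Hodge structures forces the weights of $H^{2}(\fr{n})$ to lie among those of $H^{2}(M,\bb{R})$, which are $2,3,4$ by \cref{Deligne}; hence $H^{2}(\fr{n}_{\bb{C}})=H_{2}^{2}\oplus H_{3}^{2}\oplus H_{4}^{2}$. Here one uses that the weight grading on $H^{*}(\fr{n}_{\bb{C}})$ coming from \cref{MHS} coincides with the grading induced on cohomology by $\fr{n}_{\bb{C}}=\bigoplus_{i}\fr{n}_{i}$, which holds because the differential of $\bigwedge^{\bullet}\fr{n}^{*}$, being dual to the bracket, respects the Deligne bigrading.

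For (H) I would use the parity of pure Hodge structures of odd weight. The bigrading identifies $\fr{n}_{i}$ with $Gr_{i}^{W}(\fr{n}_{\bb{C}})=Gr_{i}^{W}(\fr{n})\otimes\bb{C}$, and $Gr_{i}^{W}(\fr{n})$ carries a pure $\bb{R}$-Hodge structure of weight $i$; its Hodge decomposition pairs the $(p,q)$ and $(q,p)$ pieces under complex conjugation, and when $i=p+q$ is odd there is no $(p,p)$ piece, so $Gr_{i}^{W}(\fr{n})$ is even-dimensional over $\bb{R}$ and hence $\fr{n}_{i}$ is even-dimensional over $\bb{C}$. The same argument applied to the pure Hodge structure of weight $k$ on $Gr_{k}^{W}H^{j}(\fr{n})\cong H_{k}^{j}$ shows $\dim H_{k}^{j}$ is even for odd $k$, which completes (H).

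The main obstacle is really the first step: establishing that Morgan's mixed Hodge structure on the Malcev Lie algebra exists and is functorial, that its bracket is a morphism of mixed Hodge structures, and that it restricts to Deligne's canonical structure on $H^{1}(M,\bb{R})$ and (as a substructure) on $H^{2}(M,\bb{R})$. This is the content of \cite{Morgan}, obtained through the bar-construction and minimal-model machinery of rational homotopy theory; granting it, the remaining steps are formal — strictness of morphisms of mixed Hodge structures, compatibility of the induced gradings on exterior powers and on cohomology, and the elementary parity of odd-weight pure Hodge structures.
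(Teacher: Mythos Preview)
Your proposal is correct and follows essentially the same route as the paper. The paper does not give a formal proof of this theorem: it is stated with attribution to \cite{Morgan}, and the paragraph preceding it explains that Morgan's mixed Hodge structure on $\fr{n}$, combined with \cref{MHS} and the weight constraints of \cref{Deligne}, yields the grading with properties {\bf (W)} and {\bf (H)}; your write-up simply spells out these implications (compatibility of the bracket with the Deligne splitting, the weight range on $\fr{n}$ coming from the abelianization, and the parity of odd-weight pure Hodge structures) in more detail than the paper does.
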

\begin{cor}\label{fundamentallattice}
Let $N$ be a simply connected nilpotent Lie group and $\fr{n}$ its Lie algebra. Let $\Gamma$ be a lattice of $N$. If $\fr{n}$ does not admit a grading satisfying {\bf (W)} and {\bf (H)}, then $\Gamma$ is not isomorphic to the fundamental group of any smooth complex algebraic variety. 
\end{cor}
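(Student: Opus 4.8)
The plan is to deduce \cref{fundamentallattice} directly from \cref{WH} by contraposition. Suppose, aiming for a contradiction, that $\Gamma$ is isomorphic to the fundamental group of some smooth complex algebraic variety $M$ with base point $x$, so that $\Gamma\cong\pi_1(M,x)$. The first step is to transport the lattice hypothesis across this isomorphism. Since $\Gamma$ is a lattice in the simply connected nilpotent Lie group $N$, and since, by the correspondence recalled in the introduction (\cite{Raghu}), a finitely generated torsion-free nilpotent group is embedded as a lattice in a \emph{unique} simply connected nilpotent Lie group, the group $\pi_1(M,x)$ is itself a lattice in $N$, and the Lie algebra of the Malcev completion of $\pi_1(M,x)$ is the same $\fr{n}$. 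Thus the hypotheses of \cref{WH} are met with this $N$ and this $\fr{n}$.

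The second step is then simply to invoke \cref{WH}: it produces a grading $\fr{n}_{\bb{C}}=\dis{\bo_{i\leq -1}\fr{n}_i}$ satisfying conditions {\bf (W)} and {\bf (H)}. This contradicts the hypothesis of the corollary that $\fr{n}$ admits no grading satisfying {\bf (W)} and {\bf (H)}. Hence no such isomorphism $\Gamma\cong\pi_1(M,x)$ can exist, which is exactly the assertion of the corollary.

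There is no genuine obstacle here, as the statement is essentially a reformulation of \cref{WH}; the only points deserving a line of care are bookkeeping ones. First, one must be explicit that being \emph{isomorphic} to $\pi_1(M,x)$ suffices to apply \cref{WH} with the very same Lie algebra $\fr{n}$ — this is precisely where the uniqueness half of the nilpotent-group/lattice correspondence is used. Second, one should record the (harmless) convention that a grading ``on $\fr{n}$'' in the statement of the corollary is understood as a grading on the complexification $\fr{n}_{\bb{C}}$, matching the form of the grading delivered by \cref{WH}.
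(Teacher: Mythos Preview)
Your proof is correct and follows essentially the same contrapositive argument as the paper: assume $\Gamma\cong\pi_1(M,x)$, identify $\fr{n}$ with the Lie algebra attached to $\pi_1(M,x)$, apply \cref{WH}, and reach a contradiction. The only cosmetic difference is that the paper cites \cite{Malcev} for the identification of $\fr{n}$ with the Lie algebra of the Malcev completion of $\Gamma$, whereas you invoke the uniqueness statement from \cite{Raghu} recalled in the introduction; these justify the same fact.
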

\begin{proof}
Assume that $\Gamma$ is isomorphic to the fundamental group of a smooth algebraic variety. By \cite{Malcev}, the $\bb{R}$-Malcev completion of $\Gamma$ is isomorphic to $\fr{n}$. Thus, by \cref{WH}, we see that $\fr{n}$ admits a grading satisfying {\bf (W)} and {\bf (H)}. This contradicts the assumption.
\end{proof}
\begin{eg}\label{abelians}
Let $\bb{C}$ be the $1$-dimensional abelian Lie algebra. Take a non-zero element $X$. Then the grading $\bb{C}=\ang{X}_{-2}$ satisfies the conditions {\bf (W)} and {\bf (H)}. In the case of the $2$-dimensional abelian Lie algebra $\bb{C}^2$, we may set $\bb{C}^2=\ang{X,Y}_{-1}$ where $X$ and $Y$ form a basis of $\bb{C}^2.$  
\end{eg}
\begin{eg}
By \cref{n-2fili}, the grading of $\fr{n}_5^2$, $\fr{n}_5^2=\ang{X_0,X_1,Y_1,Y_2}_{-1}\o\ang{X_2}_{-2}$ satisfies the conditions.
\end{eg}
\begin{prop}\label{trivialextension}
Let $\fr{n}$ be a nilpotent Lie algebra which admits a grading satisfying {\bf (W)} and {\bf (H)}. Then any trivial extension $\fr{n}\o\bb{C}^m$ also admits such a grading.
\end{prop}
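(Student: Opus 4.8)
The plan is to write down an explicit grading on the trivial extension and then verify {\bf (W)} and {\bf (H)} for it by a K\"unneth computation. Concretely, I would keep the given grading $\fr n=\bo_{i\leq -1}\fr n_i$ on the first summand and declare every basis vector of the abelian factor $\bb C^m$ to have degree $-2$. Since the bracket of $\fr n\o\bb C^m$ vanishes except on $\fr n$, the resulting decomposition of $\fr n\o\bb C^m$ is a Lie algebra grading, and it is concentrated in degrees $\leq -1$, so it is a grading of the type occurring in \cref{WH}. It then remains to check the two conditions {\bf (W)} and {\bf (H)} for this grading.

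For the cohomology I would use the K\"unneth formula. The cochain complex $(\bw^{\bullet}(\fr n\o\bb C^m)^{*},d)$ of $\fr n\o\bb C^m$ splits as $\bw^{\bullet}\fr n^{*}\otimes\bw^{\bullet}(\bb C^m)^{*}$ with differential $d_{\fr n}\otimes\mathrm{id}$, since $\bb C^m$ is abelian and central, so the differential on the second tensor factor vanishes; hence $H^{\bullet}(\fr n\o\bb C^m)\cong H^{\bullet}(\fr n)\otimes\bw^{\bullet}(\bb C^m)^{*}$, and this isomorphism respects the weight gradings because $d_{\fr n}$ is weight-preserving. The point of placing all of $\bb C^m$ in the single even degree $-2$ is that $(\bb C^m)^{*}$ then sits in weight $2$, so $\bw^{b}(\bb C^m)^{*}$ sits in weight $2b$; in particular only the even weights $0,2,4$ occur in $\bw^{\leq 2}(\bb C^m)^{*}$.

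Condition {\bf (W)} is then immediate: $H^1(\fr n\o\bb C^m)\cong H^1(\fr n)\o(\bb C^m)^{*}$ has weights in $\{1,2\}\cup\{2\}=\{1,2\}$, and $H^2(\fr n\o\bb C^m)\cong H^2(\fr n)\o\bigl(H^1(\fr n)\otimes(\bb C^m)^{*}\bigr)\o\bw^2(\bb C^m)^{*}$ has weights in $\{2,3,4\}\cup\bigl(\{1,2\}+\{2\}\bigr)\cup\{4\}\subseteq\{2,3,4\}$, using {\bf (W)} for $\fr n$. For condition {\bf (H)}: for odd $k$ the degree-$k$ component of $\fr n\o\bb C^m$ is just $\fr n_k$, because $\bb C^m$ contributes only in the even degree $-2$, and this is even-dimensional by {\bf (H)} for $\fr n$; and in the K\"unneth decomposition of the weight-$k$ part of $H^j(\fr n\o\bb C^m)$ into pieces $H^{a}_{k_1}(\fr n)\otimes\bigl(\bw^{b}(\bb C^m)^{*}\bigr)_{k_2}$, the second factor vanishes unless $k_2=2b$ is even, which for odd $k$ forces $k_1=k-k_2$ to be odd, whence $\dim H^{a}_{k_1}(\fr n)$ is even by {\bf (H)} for $\fr n$ and each piece has even dimension.

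I do not expect a genuine obstacle here; the two places needing a little care are the compatibility of the K\"unneth isomorphism with the weight grading (routine, from weight-invariance of $d_{\fr n}$) and the parity bookkeeping in {\bf (H)}. The only real idea is the choice to concentrate $\bb C^m$ in a single even degree, which automatically keeps the weight bounds of {\bf (W)} and makes the parity requirement of {\bf (H)} fall out; placing $\bb C^m$ in degree $-1$ would also work when $m$ is even, but would force an auxiliary parity argument about binomial coefficients in the cohomology for general $m$.
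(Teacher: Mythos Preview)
Your proposal is correct and takes essentially the same approach as the paper: both place the abelian summand $\bb{C}^m$ entirely in degree $-2$ and then verify {\bf (W)} and {\bf (H)} via the K\"unneth decomposition $H^{\bullet}(\fr n\o\bb C^m)\cong H^{\bullet}(\fr n)\otimes\bw^{\bullet}(\bb C^m)^{*}$. Your write-up is in fact a bit more systematic than the paper's, which carries out the same computation by hand on $H^1$ and $H^2$ (and only checks the parity of $\dim H_3^2$ explicitly), whereas your K\"unneth bookkeeping handles the parity condition {\bf (H)} uniformly for all $H^j_k$.
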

\begin{proof}
Let $\fr{n}=\bo_{i\leq -1}\fr{n}_i$ be a grading satisfying the conditions {\bf (W)} and {\bf (H)}, and the induced grading $H^*(\fr{n})=\bo_{i\geq 1}H_i^*$. We define $({\fr{n}\o\bb{C}^m})_{-2}:=\fr{n}_{-2}\o\bb{C}^m$ and $({\fr{n}\o\bb{C}^m})_{i}:=\fr{n_i}$ for other $i$. Since the differentiation of the dual basis for any basis of $\bb{C}^m$ are all zero, we have 
\ali{d(x\w y)=dx\w y}
for all $x\in \bw^{\bullet}\fr{n}^*$ and $y\in \bw^{\bullet}(\bb{C}^m)^*.$ Hence we have $dx=0$ if and only if $d(x\w y)=0$ for any $x\in \bw^{\bullet}\fr{n}^*$ and $y\in \bw^{\bullet}(\bb{C}^m)^*.$ Thus we see that 
\ali{H^1(\fr{n}\o\bb{C}^m)=H_1^1\o {H_2^1}',\ H^2(\fr{n}\o\bb{C}^m)=H_2^2\o {H_3^2}'\o{H_4^2}'}
where ${H_2^1}'=H_2^2\o (\bb{C}^m)^*$, ${H_3^2}'=H_3^2\o\ang{[x\w y]\mid x\in (\fr{n}_{-1})^*,y\in (\bb{C}^m)^*}.$ and ${H_4^2}'=H_4^2\o\ang{[y_i\w y_j]\mid y_i,y_j\in (\bb{C}^m)^*, 1\leq i<j\leq m}$ Thus the grading satisfies the condition {\bf (W)}. Since \ali{\dim {{H_3^2}'}=m\cdot\dim H_1^1 +\dim H_2^3\in 2\bb{Z}.} Therefore the condition {\bf (H)} holds.  
\end{proof}
\begin{eg}
Let us consider the grading on $\fr{n}_3(\bb{C}).$ Take the basis $X_1,X_2$ and $X_3$ same as \cref{Heisenberg}, we define the grading on $\fr{n}_3(\bb{C})$ to be $\fr{n}_3(\bb{C})=\ang{X_1,X_2}_{-1}\o\ang{X_3}_{-2}.$ Then the induced grading on the first and second cohomologies of $\fr{n}_3(\bb{C})$ are
\ali{H^1(\fr{n}_3(\bb{C}))=H_1^1=\ang{[x_1],[x_2]}_{1},\ 
H^2(\fr{n}_3(\bb{C}))=H_2^3=\ang{[x_1\w x_3],[x_2\w x_3]}_{3}}
where $x_1,x_2$ and $x_3$ are the dual basis. Consider the direct sum decomposition $\fr{n}_3(\bb{C})\o\bb{C}^m$, take a basis $Y_1,\dots, Y_m\in \bb{C}^m$ and let $y_1,\dots, y_m$ be the dual basis. Let $\fr{n}_3(\bb{C})\o\bb{C}^m=\ang{X_1,X_2}_{-1}\o\ang{X_3,Y_1,\dots,Y_m}_{-2}$ be the grading. Since $dy_j=0$ and $d(x_i\w y_j)=0$, and $d(x_3\w y_j)=-x_1\w x_2\w y_j$ for $i=1,2$ and $1\leq j\leq m$, we have
\ali{H^1(\fr{n}_3(\bb{C})\o\bb{C}^m)=\ang{[x_1],[x_2]}_1\o\ang{[y_1],\dots, [y_m]}_2}
and
\ali{H^2(\fr{n}_3(\bb{C})\o\bb{C}^m)=\ang{[x_1\w x_3],[x_2,\w x_3],[x_i\w y_j]\mid i=1,2,\ 1\leq j\leq m}_{3}.}
Thus the grading satisfies the conditions {\bf (W)} and {\bf (H)}.
\end{eg}
\section{Main Theorem}\label{9}
\begin{prop}\label{mainthm}
Let $\fr{n}$ be a filiform $\bb{C}$-Lie algebra satisfying the conditions {\bf (W)} and {\bf (H)} (in \cref{W}). Then we have $\dim \fr{n}<4.$
\end{prop}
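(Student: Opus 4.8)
The plan is to read off the shape of the grading from the conditions {\bf (W)} and {\bf (H)}, first on $H^1$ and then, in one of the two resulting cases, on $H^2$, and to let the filiform dimension identities force a contradiction as soon as $\dim\fr{n}\geq 4$. Write $m=\dim\fr{n}$. Since $\fr{n}$ is filiform, $\dim C^1\fr{n}=m-2$, so the abelianization $\fr{n}/C^1\fr{n}$ is $2$-dimensional and $\dim H^1(\fr{n})=2$. By {\bf (W)} we have $H^1(\fr{n}_{\bb{C}})=H_1^1\o H_2^1$, and by {\bf (H)} the number $\dim H_1^1$ is even; hence $(\dim H_1^1,\dim H_2^1)$ is either $(2,0)$ or $(0,2)$, and I would split into these two cases.

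Case $(2,0)$: here $H^1$ sits in weight $1$, so the abelianization sits in degree $-1$. Using \cref{central} with $k=1$ one gets $C^1\fr{n}\cap\fr{n}_{-1}=\0$, so $\fr{n}_{-1}$ maps isomorphically onto $\fr{n}/C^1\fr{n}$; being a lift of a generating set of the nilpotent Lie algebra $\fr{n}$, the subspace $\fr{n}_{-1}$ generates $\fr{n}$, and therefore $\fr{n}_{-k}\cong C^{k-1}\fr{n}/C^{k}\fr{n}$ for all $k\geq 1$. The filiform identities then give $\dim\fr{n}_{-1}=2$, $\dim\fr{n}_{-k}=1$ for $2\leq k\leq m-1$, and $\fr{n}_{-k}=\0$ for $k\geq m$. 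If $m\geq 4$ this makes $\fr{n}_{-3}$ one-dimensional, contradicting {\bf (H)} since $3$ is odd. Hence $m\leq 3$ in this case.

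Case $(0,2)$: now $H^1$ sits in weight $2$, so the abelianization sits in degree $-2$; the same argument with \cref{central} gives $\fr{n}_{-1}=\0$, $\dim\fr{n}_{-2}=2$, and that $\fr{n}_{-2}$ generates $\fr{n}$, so $\fr{n}$ is concentrated in even negative degrees, $\fr{n}=\fr{n}_{-2}\o\fr{n}_{-4}\o\cdots\o\fr{n}_{-2(m-1)}$ with graded dimensions $2,1,\dots,1$. Then $\bw^2\fr{n}^*$ is concentrated in even weights $\geq 4$, so {\bf (W)} forces $H^2(\fr{n}_{\bb{C}})=H_4^2$. I would then suppose $m\geq 3$ and compute $d$ on the weight-$6$ part of $\bw^2\fr{n}^*$: with $A,B$ a basis of $\fr{n}_{-2}$ normalised so that $[A,B]$ spans $\fr{n}_{-4}$ and $a,b,c$ the corresponding dual covectors, the weight-$6$ cochains form $\ang{a\w c,\,b\w c}$, all of them closed, while $d$ carries the weight-$6$ part of $\fr{n}^*$ — which has dimension $\leq 1$, being spanned by the dual of a generator of $\fr{n}_{-6}$ when $m\geq 4$ and zero when $m=3$ — into this plane with image of dimension at most $1$. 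Hence $H_6^2\neq\0$, contradicting {\bf (W)}, so $m\leq 2$ in this case. Combining the two cases yields $\dim\fr{n}<4$.

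I expect the cochain computations of $d$ on the relevant two-dimensional graded pieces and the dimension bookkeeping (just \cref{central} together with the defining equality $\dim C^i\fr{n}=m-i-1$) to be routine. The step that needs the most care is the passage, in both cases, from ``$H^1$ is concentrated in a single weight'' to ``$\fr{n}$ is generated in the corresponding degree'': one must check that a homogeneous complement of $C^1\fr{n}$ of the right degree exists and generates the nilpotent Lie algebra, and then verify that the filiform identities pin down every graded dimension. Once that is done, the two contradictions — a one-dimensional $\fr{n}_{-3}$ in the first case and a non-zero $H_6^2$ in the second — come out immediately.
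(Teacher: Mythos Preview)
Your argument is correct, and its overall shape matches the paper's: both split into the two cases $H^1=H_1^1$ versus $H^1=H_2^1$, and both reach the same contradictions (a one–dimensional piece in degree $-3$ violating {\bf (H)} in the first case, a nonzero $H_6^2$ violating {\bf (W)} in the second).

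The execution differs, however. The paper first invokes \cref{gradingstep} to force the length of the grading to be $n-1$ or $n$, rules out $k=n$, argues that the unique two–dimensional piece must be the top one $\fr{n}_{i_1}$, and then determines $i_2,i_3,\dots$ one at a time by writing out $dy_3,dy_4,\dots$ explicitly. You bypass all of this with a single structural observation: once $H^1$ is concentrated in a single weight $w$, the piece $\fr{n}_{-w}$ is a homogeneous complement of $C^1\fr{n}$, hence generates the nilpotent Lie algebra, and therefore the grading is forced to be the associated graded of the lower central series (shifted into degrees $-w,-2w,\dots$). The filiform identities $\dim C^{k-1}/C^k\in\{2,1,\dots,1,0\}$ then read off every graded dimension at once. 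This is cleaner and more conceptual than the paper's step–by–step index chase; it also makes transparent why the two cases $(2,0)$ and $(0,2)$ are the only ones, without the intermediate case analysis on the length $k$. The trade–off is that your argument leans on the (standard but unstated in the paper) fact that a lift of the abelianization generates a nilpotent Lie algebra, and that in a graded Lie algebra generated in a single degree one has $C^{k-1}\fr{n}=\bigoplus_{j\geq k}\fr{n}_{-jw}$; you flag this as ``the step that needs the most care,'' and indeed it is the only point requiring justification beyond what is already in the paper.
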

\begin{proof}
Let $\fr{n}=\fr{n}_{i_1}\o\cdots\o\fr{n}_{i_k}$ be a grading such that the conditions {\bf (W)} and {\bf (H)} hold. To prove the proposition, we assume that $\dim\fr{n}=n\geq 4$. Since $\fr{n}$ is $(n-1)$-step nilpotent, then we have $k=n-1$ or $n$. This is because $\fr{n}$ is filiform and follows by \cref{gradingstep}.

Assume that $k=n$. We have $\dim\fr{n}_{i_j}=1$ for all $1\leq j\leq n$. Since $\dim C^1\fr{n}=n-2$, we have $C^1\fr{n}=\fr{n}_{i_3}\o\cdots\o\fr{n}_{i_n}$. Thus $H^1(\fr{n})\supset \fr{n}_{i_1}^*\o\fr{n}_{i_2}^*.$ We see that the dimensions of both sides are $2$. By {\bf (W)}, it follows that $H_1^1\o H_2^1=H^1(\fr{n})=\fr{n}_{i_1}^*\o\fr{n}_{i_2}^*$. Thus $\dim\fr{n}_{-1}=1\neq even$. This contradicts {\bf (H)}. Thus $k=n-1$.

We show that $\dim\fr{n}_{i_1}=2$. Now that $k=n-1$, we have $\dim\fr{n}_{i_j}=2$ for some $1\leq j\leq n-1$. If $j\neq 1$, then $\fr{n}$ can be written by 
\ali{\fr{n}&=\fr{n}_{i_1}\o\cdots\o\fr{n}_{i_{n-1}}\\
&=\ang{Y_1}_{i_1}\o\fr{n}_{i_2}\o\cdots .}
Hence we have $n-2=\dim C^1\fr{n}=\dim(\fr{n}_{i_3}\o\cdots)=n-3$ or $n-2.$ Thus $\dim\fr{n}_{i_2}\neq 2$ and $j>2$. Then, for the same reason when $k=n$, we have $\dim\fr{n}_{-1}=1\neq even$. This is a contradiction. Thus $\dim{n}_{i_1}=2$.

Let $\fr{n}=\ang{Y_1,Y_2}_{i_1}\o\ang{Y_3}_{i_2}\o\ang{Y_4}_{i_3}\o\cdots$ and $\fr{n}^*=\ang{y_1,y_2}_{-i_1}\o\ang{y_3}_{-i_2}\o\ang{y_4}_{-i_3}\o\cdots$. Since $dy_1=dy_2=0$ and $\dim H^1(\fr{n})=2$, we have 
\ali{\ang{y_1,y_2}_{-i_1}=H^1(\fr{n})=H_1^1\o H_2^1.}
Thus $i_1=-1$ or $-2$. Assume $i_1=-1$. Write $dy_3=-ay_1\w y_2\ (a\in\bb{C})$. Since $y_1,y_2$ and $y_3$ are linearly independent, $a\neq 0$. Thus $\fr{n}_{-2}\ni [Y_1,Y_2]=aY_3(\neq 0) \in\fr{n}_{i_2}$ then $i_2=-2$. There exist constants $a', b', c'\in\bb{C}$ such that 
\ali{dy_4=-a'y_1\w y_2-b'y_1\w y_3 -c'y_2\w y_3.}
Equivalently, we have
\ali{\fr{n}_{i_3}\ni Y_4=a'[Y_1,Y_2]+b'[Y_1,Y_3]+c'[Y_2,Y_3]\in \fr{n}_{-2}\o\fr{n}_{-3}.}
Thus $\fr{n}_{i_3}\cap (\fr{n}_{-2}\o\fr{n}_{-3})\neq \0$. It follows that $\fr{n}_{i_3}\cap\fr{n}_{-2}\neq\0$ or $\fr{n}_{i_3}\cap\fr{n}_{-3}\neq\0$. Since $y_3$ and $y_4$ are linearly independent, we have $\fr{n}_{i_3}\cap\fr{n}_{-2}=\0$. Thus $i_3=-3$. This contradicts {\bf (H)}. Thus $i_1=-2$. For the same reason when $i_1=1$, we can write 
\ali{\fr{n}=\ang{Y_1,Y_2}_{-2}\o\ang{Y_3}_{-4}\o\ang{Y_4}_{-6}\o\cdots,} 
\ali{\fr{n}^*=\ang{y_1,y_2}_{2}\o\ang{y_3}_{4}\o\ang{y_4}_{6}\o\o\cdots,}
and 
\ali{\bw^2\fr{n}^*=\ang{y_1\w y_2}_{4}\o\ang{y_1\w y_3,y_2\w y_3}_{6}\o\cdots.}
Since $dy_3=-ay_1\w y_2$ $(a\neq 0)$, $y_1\w y_2\in {\rm Im}(d:\fr{n}^*\rightarrow \bw^2\fr{n}^*)$ and the condition {\bf (W)}, we have $H^2(\fr{n})=H_4^2=\0$. Since $d(y_1\w y_3)=d(y_2\w y_3)=0,$ we see that ${\rm Ker}:(d:C_6^2\rightarrow C_6^3)=\ang{y_1\w y_3, y_2\w y_3}.$ Moreover, since $\dim {\rm Im}(d:C_6^1\rightarrow C_6^2)\leq 1,$ we have $\dim H_6^2\geq 1$. This contradicts that $H^2(\fr{n})=\0.$
\end{proof}
\begin{eg}\label{Heisengrad}
For $n\geq 3,$ we define the Lie algebra $\fr{n}_n=\ang{X_1,X_2,X_3,\dots,X_n}$ to be $[X_1,X_i]=X_{i+1}.$ If $n=3$, then $\fr{n}_3$ admits a grading satisfying the conditions. One can take $\fr{n}_3=\ang{X_1,X_2}_{-1}\o \ang{X_3}_{-2}$ in this case. The grading on the first and second cohomologies are
\ali{H^1(\fr{n}_3)=\ang{[x_1],[x_2]}_1}
and
\ali{\ H^2(\fr{n}_3)=\ang{[x_1\w x_3],[x_2\w x_3]}_3}
where $x_1,x_2$ and $x_3$ are the dual basis.
 Moreover, by the filiformness of $\fr{n}_n$ and \cref{mainthm}, the Lie algebra $\fr{n}_n$ admits a grading satisfying the conditions {\bf (W)} and {\bf (H)} if and only if $n=3$.
\end{eg}
\begin{prop}\label{10.5}
For an integer $p\geq 2$, we assume that $\fr{n}$ is a $p$-filiform $\bb{C}-$Lie algebra dimensional $m\geq 5$ and $m\geq p+3$. Let $\fr{n}=\bo_{i\leq -1}\fr{n}_i$ be a grading satisfying {\bf (W)} and {\bf(H)}. For the subspaces $\fr{n}_{-1}$ and $\fr{n}_{-2}$, we have
\begin{align}
\fr{n}_{-1}\neq \0, \fr{n}_{-2}\neq \0 \text{ and } \dim \fr{n}_{-2}^*=\dim H_2^1+1. 
\end{align}
\end{prop}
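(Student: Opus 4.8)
\section*{Proof proposal}

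The plan is to reduce the proposition to three statements about the bottom of the grading, dispatch the first two quickly, and concentrate on the middle one. For the reduction: condition (W) in cohomological degree one says $H^1(\fr{n})=H_1^1\o H_2^1$, and since $H^1(\fr{n})\cong(\fr{n}/C^1\fr{n})^*$ with its induced grading, this forces $\fr{n}/C^1\fr{n}$ to be concentrated in degrees $-1,-2$; in particular $\fr{n}$ is generated by $\fr{n}_{-1}\o\fr{n}_{-2}$ and $\dim H_2^1=\dim(\fr{n}/C^1\fr{n})_{-2}=\dim\fr{n}_{-2}-\dim(C^1\fr{n})_{-2}=\dim\fr{n}_{-2}-\dim[\fr{n}_{-1},\fr{n}_{-1}]$. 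On the other hand, $\fr{n}$ being $p$-filiform with $m\ge p+2$ gives $\dim C^1\fr{n}/C^2\fr{n}=1$, and $(C^2\fr{n})_{-2}=\0$ for degree reasons, so $\dim[\fr{n}_{-1},\fr{n}_{-1}]=\dim(C^1\fr{n})_{-2}\le 1$. Hence the equality $\dim\fr{n}_{-2}^*=\dim H_2^1+1$ is equivalent to $\dim[\fr{n}_{-1},\fr{n}_{-1}]=1$, and the whole proposition reduces to the three claims $\fr{n}_{-2}\ne\0$, $\fr{n}_{-1}\ne\0$, $[\fr{n}_{-1},\fr{n}_{-1}]\ne\0$.

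If $\fr{n}_{-2}=\0$, then $\fr{n}/C^1\fr{n}$ sits in degree $-1$, so $\fr{n}$ is generated by $\fr{n}_{-1}$, while $[\fr{n}_{-1},\fr{n}_{-1}]\subset\fr{n}_{-2}=\0$ forces $\fr{n}$ to be abelian, hence $(m-1)$-filiform, i.e.\ $p=m-1$, contradicting $m\ge p+3$; so $\fr{n}_{-2}\ne\0$. Next suppose $\fr{n}_{-1}=\0$. Then $\fr{n}$ is generated by $\fr{n}_{-2}$, hence supported in even degrees $-2,-4,-6,\dots$, and since the $p$-filiform lower central series drops by one past the first step, $\fr{n}_{-4}=\ang{e_2}$ is one-dimensional with $[\fr{n}_{-2},\fr{n}_{-2}]=\ang{e_2}$. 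If $[\fr{n}_{-2},\fr{n}_{-4}]=\0$, a short induction with the Jacobi identity gives $\fr{n}_{-2j}=\0$ for $j\ge3$, so $C^1\fr{n}=\fr{n}_{-4}$ is one-dimensional and $m=p+2$, a contradiction; hence $\fr{n}_{-6}=\ang{e_3}$ is one-dimensional with $[\fr{n}_{-2},e_2]=\ang{e_3}$. Writing $[u,v]=\omega(u,v)e_2$ with $\omega\in\bw^2\fr{n}_{-2}^*$ and $[v,e_2]=\alpha(v)e_3$ with $0\ne\alpha\in\fr{n}_{-2}^*$, the Jacobi identity on $\fr{n}_{-2}$ becomes $\alpha\w\omega=0$, so $\omega=\alpha\w\beta$ is decomposable. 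In the grading-induced complex one then has $de_2^*=-\omega$, $de_3^*=-\alpha\w e_2^*$, $C_6^2=\fr{n}_{-2}^*\w\ang{e_2^*}$, $C_6^3=\bw^3\fr{n}_{-2}^*$, so $\Ker{(d\colon C_6^2\to C_6^3)}=\{f\w e_2^*\mid f\w\omega=0\}=\ang{\alpha,\beta}\w\ang{e_2^*}$ has dimension $2$ while $\Im{(d\colon C_6^1\to C_6^2)}=\ang{\alpha\w e_2^*}$ has dimension $1$; thus $H_6^2\ne\0$, contradicting (W). Hence $\fr{n}_{-1}\ne\0$.

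Finally suppose $[\fr{n}_{-1},\fr{n}_{-1}]=\0$. Then the one-dimensional space $C^1\fr{n}/C^2\fr{n}$ is homogeneous of degree $\le-3$. If that degree is $-3$, then $\fr{n}_{-3}=(C^1\fr{n})_{-3}=[\fr{n}_{-1},\fr{n}_{-2}]$ is one-dimensional, contradicting (H). Otherwise $[\fr{n}_{-1},\fr{n}_{-2}]=\0$, so $\fr{n}_{-1}$ is central and $\fr{n}\cong\bb{C}^a\o\fr{n}'$ with $a=\dim\fr{n}_{-1}$ and $\fr{n}'=\ang{\fr{n}_{-2}}$ a $(p-a)$-filiform Lie algebra (with $p-a\ge 1$ since $\fr{n}'$ has step $m-p\ge3$) of dimension $m-a\ge(p-a)+3$, generated in degree $-2$; by the computation of the previous paragraph $H_6^2(\fr{n}')\ne\0$, so K\"unneth's formula for Lie algebra cohomology produces a nonzero degree-$6$ class in $H^2(\fr{n})$, again contradicting (W). Therefore $\dim[\fr{n}_{-1},\fr{n}_{-1}]=1$, which finishes the proof.

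The crux of the whole argument is the step $\fr{n}_{-1}\ne\0$: purely-even gradings survive every dimension-count and parity test, so they cannot be excluded by numerical considerations, and one is forced to extract the decomposability of $\omega$ from the Jacobi identity and then exhibit the explicit non-exact $2$-cocycle in cohomological degree $6$ — exactly the computation that also drives the last step through the direct-sum reduction.
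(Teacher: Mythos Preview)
Your argument is correct and follows a genuinely different route from the paper's. The paper proceeds by a cascade of case distinctions on the top index $i_1$ of the grading and on $\dim\fr{n}_{i_1}$: it first excludes $i_1=-2$ by computing $H_6^2$, then inside $i_1=-1$ excludes $\dim\fr{n}_{-1}=p+1$, introduces the excess $q=\dim(C^1\fr{n})_{-2}$, bounds $q\le 1$ via $C^2\fr{n}\subset\bo_{i<i_2}\fr{n}_i$, and finally rules out $q=0$ through a further split on $\dim\fr{n}_{i_3}$. You instead recast the target equality as $\dim[\fr{n}_{-1},\fr{n}_{-1}]=1$ and reduce everything to the three claims $\fr{n}_{-2}\ne\0$, $\fr{n}_{-1}\ne\0$, $[\fr{n}_{-1},\fr{n}_{-1}]\ne\0$; the last is handled by stripping off the central abelian factor $\fr{n}_{-1}$ and invoking K\"unneth to recycle the degree-$6$ cocycle already built for the second claim. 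This buys you a single cohomological computation in place of two rounds of dimension bookkeeping. Your treatment of the case $\fr{n}_{-1}=\0$ is in fact sharper than the paper's: the paper asserts $d(x_i\w x_{p+2})=0$ for all $1\le i\le p+1$ and hence $\dim\Ker{(d\colon C_6^2\to C_6^3)}=p+1$, but $d(x_i\w x_{p+2})=-x_i\w dx_{p+2}$ vanishes only when $x_i$ divides the nonzero $2$-form $dx_{p+2}$, so the kernel has dimension $2$, not $p+1$. Your extraction of $\alpha\w\omega=0$ from the Jacobi identity (equivalently $d^2=0$), forcing $\omega$ decomposable, is exactly what is needed to make this step rigorous and still yields $\dim H_6^2=1\ne 0$.
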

\begin{proof}
Write $\fr{n}=\fr{n}_{i_1}\o \cdots \o\fr{n}_{i_k}$. Since $\fr{n}$ is $(m-p)$-step and by the \cref{gradingstep}, we have $k\geq m-p$. By the condition {\bf (W)}, $i_1=-1$ or $i_1=-2$.

We first assume that $i_1=-2$. By {\bf (W)} and $p-$filiformness of $\fr{n}$, we have \ali{\dim \fr{n}_{-2}=\dim H^1(\fr{n})=\dim \fr{n}-\dim C^1\fr{n}=m-(m-1-p)=p+1.\notag}
Thus we can write $\fr{n}=\ang{X_1,\dots, X_{p+1}}_{-2}\o\fr{n}_{i_2}\o\cdots\o \fr{n}_{i_k}$. Since the number of the components of $\fr{n}_{i_2}\o\cdots\o \fr{n}_{i_k}$, that is, $k-1\leq \dim (\fr{n}_{i_2}\o\cdots\o \fr{n}_{i_k})=m-(p+1)$ and $m-p\leq k$, it follows that $k=m-p$ and $\dim\fr{n}_{i_j}=1$ for all $2\leq j\leq m-p.$ Thus we can write \ali{\fr{n}=\ang{X_1,\dots, X_{p+1}}_{-2}\o\ang{X_{p+2}}_{i_2}\o\ang{X_{p+3}}_{i_3}\o\cdots.}Let $x_i$ $(1\leq i\leq m)$ be the dual basis of $X_i$. Since $X_{p+2},X_{p+3}\in C^1\fr{n}$, we have $dx_{p+2}, dx_{p+3}\neq 0$. Thus we have $i_2=-4$ and $i_3=-6$. For $1\leq i\leq p+1$, we see that  $d(x_i\w x_{p+2})=0$. Then \ali{\dim {\rm Ker}(d:C_6^2\rightarrow C_6^3)=p+1,\notag\\
\dim {\rm Im}(d:\fr{n}_{-6}^*\rightarrow C_6^2)=1,\notag \\
\dim H_6^2=p+1-1=p\geq 2.}
This contradicts {\bf (W)}. Thus, we conclude that $i_1=-1$.

By {\bf (H)}, we can write the grading as $\fr{n}=\ang{X_1,\dots, X_{2l}}_{-1}\o\fr{n}_{i_2}\o\cdots\o\fr{n}_{i_k}$ for some integer $l$. If $\dim\fr{n}_{-1}=p+1$, then, by {\bf (H)}, we have $p+1\in 2\bb{Z}$ and $\dim (\fr{n}_{i_2}\o\cdots\o\fr{n}_{i_k})=m-(p+1)=m-p-1=\dim C^1\fr{n}$. For the same reason as the case when $i_1=-2$, we see that $k=m-p$, $\dim\fr{n}_{i_j}=1$ for all $2\leq j\leq m-p$, $i_2=-2$ and $i_3=-3$, which contradicts the assumption {\bf (H)}. Thus $\dim\fr{n}_{-1}<p+1$ and there exists a non-zero element $x\in H^1(\fr{n})$ such that $x\notin H_1^1$. Thus we have $i_2=-2$ and we can write\ali{\fr{n}=\ang{X_1,\dots, X_{2l}}_{-1}\o\ang{X_{2l+1},\dots, X_{p+1},\dots ,X_{p+1+q}}_{-2}\o\fr{n}_{i_3}\o\cdots\o\fr{n}_{i_k}.}
We show that $q=1$. By the proof of \cref{gradingstep}, we have $m-2-p=\dim C^2\fr{n}\leq \dim (\fr{n}_{i_3}\o\cdots\o\fr{n}_{i_k})=m-(p+1+q)$. We thus get $q\leq 1$.

Assume that $q=0$. {\it i.e.} we assume that $\fr{n}=\ang{X_1,\dots, X_{2l}}_{-1}\o\ang{X_{2l+1},\dots, X_{p+1}}_{-2}\o\fr{n}_{i_3}\o\cdots\o\fr{n}_{i_k}.$ Combining with $\dim C^2\fr{n}=m-2-p\leq k-2\leq \dim (\fr{n}_{i_3}\o\cdots\o\fr{n}_{i_k})=m-(p+1)$, we have $k=m-p$ or $m-p+1$. Thus we see that $\dim \fr{n}_{i_3}=1$ or $2$. Write $\fr{n}_{i_3}=\ang{X_{p+2}}$ (or $\ang{X_{p+2},X_{p+3}}$). Let $x_i$ $(1\leq i\leq m)$ be the dual basis of $X_i$. Then we see that $d(x_j\w x_{p+2})=0$ (and $d(x_k\w x_{p+3})=0$) for all $2l+1\leq j\leq p+1$ (and $2l+1\leq k\leq p+1$). By {\bf (W)}, we have $\dim {\rm Ker}(d:C_6^2\rightarrow C_6^2)=p+1-2l=\dim {\rm Im}(d:\fr{n}_{-6}^*\rightarrow C_6^2)\leq 1$. Then we see that $p\leq 2l<p+1$, which implies $2l=p$. In the same reason, we also have $2l=p$ if $\dim \fr{n}_{i_3}=2$. By {\bf (H)}, we see that $i_3=-3$ and we can write
\ali{\fr{n}=\ang{X_1,\dots, X_p}_{-1}\o\ang{X_{p+1}}_{-2}\o\ang{X_{p+2},X_{p+3}}_{-3}\o\cdots\o\fr{n}_{i_k}.}
Since $d(x_{p+1}\w x_{p+2})=0=d(x_{p+1}\w x_{p+3})$ and $\dim {\rm Im}(d:\fr{n}_{-5}^*\rightarrow C_5^2)\leq 1$, we have $H_5^2\neq \0$. This contradicts {\bf (W)}. Thus $q=1$. Therefore $\dim \fr{n}_{-2}=\dim H_1^2+1$. 
\end{proof}
\begin{thm}\label{pfiliformMMHS}
Let $\fr{n}$ be a $p$-filiform Lie algebra which admits a grading $\fr{n}=\bo_{i\leq -1}\fr{n}_{i}$ satisfying the conditions {\bf (W)} and {\bf (H)} if and only if
\ali{\dim \fr{n}<p+3.}
Equivalently, $p$-filiform Lie algebra $\fr{n}$ admits a grading $\fr{n}=\bo_{i\leq -1}\fr{n}_i$ satisfying the conditions {\bf (W)} and {\bf (H)} if and only if $\fr{n}$ is either abelian or $2$-step nilpotent Lie algebra.
\end{thm}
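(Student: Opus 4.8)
The two conditions in the statement are equivalent, so it suffices to address the inequality. For a $p$-filiform $\fr n$ of dimension $m$ one has $\dim C^i\fr n=m-i-p$ as long as the right side is $\ge 0$, so $\fr n$ is exactly $(m-p)$-step nilpotent; hence $\dim\fr n<p+3$ means $m-p\le 2$, i.e. $\fr n$ is abelian or $2$-step. I will therefore prove: a $p$-filiform $\fr n$ admits a grading satisfying {\bf (W)} and {\bf (H)} if and only if $m\le p+2$.

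For the ``if'' part: if $m=p+1$ then $\fr n$ is abelian, $\fr n\cong\bb C^{m}$, and \cref{abelians} together with \cref{trivialextension} produce the desired grading. If $m=p+2$ then $[\fr n,\fr n]$ is $1$-dimensional and central, so $\fr n$ is a trivial extension of a Heisenberg algebra, $\fr n\cong\fr n_{2t+1}(\bb C)\o\bb C^{s}$ for some $t\ge 1$, $s\ge 0$; on $\fr n_{2t+1}(\bb C)$ the grading placing a complement of the center in degree $-1$ and the center in degree $-2$ satisfies {\bf (W)} and {\bf (H)} (the cohomology computation is that of \cref{Heisengrad}), and one more use of \cref{trivialextension} finishes it.

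For the ``only if'' part, I would assume $\fr n$ is $p$-filiform of dimension $m\ge p+3$ with a grading $\fr n=\bo_{i\le-1}\fr n_i$ satisfying {\bf (W)} and {\bf (H)} and derive a contradiction. For $p=1$ this is exactly \cref{mainthm}, so take $p\ge 2$; then $m\ge 5$ and \cref{10.5} applies, giving that the grading starts in degree $-1$, that $\fr n_{-1}\ne\0$ (so $\dim\fr n_{-1}=2l$ is a positive even number), and that $\dim\fr n_{-2}=\dim H_2^1+1$. The next task is to turn this into the precise graded picture of $\fr n$. Since $H_1^1\cong\fr n_{-1}^*$ and $H^1(\fr n)$ has dimension $p+1$, {\bf (W)} forces $\dim\fr n_{-1}+\dim\fr n_{-2}=p+2$; the degree $-2$ part of $[\fr n,\fr n]$ is $[\fr n_{-1},\fr n_{-1}]$, and since $H_2^1$ is the annihilator of it inside $\fr n_{-2}^*$, the relation $\dim H_2^1=\dim\fr n_{-2}-1$ shows $[\fr n_{-1},\fr n_{-1}]$ is $1$-dimensional, say $[\fr n_{-1},\fr n_{-1}]=\ang W$. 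By \cref{central} $C^2\fr n\subseteq\bo_{i\le-3}\fr n_i$, and a dimension count ($\dim C^2\fr n=m-2-p=\dim\bo_{i\le-3}\fr n_i$) gives $C^2\fr n=\bo_{i\le-3}\fr n_i$; comparing the number of homogeneous components with \cref{gradingstep} then shows the grading has exactly $m-p$ components and that each component sitting inside $C^2\fr n$ is $1$-dimensional. By {\bf (H)} none of these can lie in odd degree, so $\fr n_{-3}=\0$; and since the component $\fr n_{i_3}$ of largest negative index in $C^2\fr n$ lies in $[\fr n,\fr n]$ while the only homogeneous brackets that can fill a degree $\le-3$ are $[\fr n_{-2},\fr n_{-2}]\subseteq\fr n_{-4}$ (any bracket involving a component of degree $\le-3$ lands strictly below $i_3$), one gets $i_3=-4$, hence $\fr n_{-4}\ne\0$ and $\fr n_{-4}\subseteq C^2\fr n$.

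The final, and in my view the decisive, step uses the Jacobi identity. For $X,Y\in\fr n_{-1}$ and $Z\in\fr n_{-2}$ we have $[[X,Z],Y]=[[Y,Z],X]=0$ because $[\fr n_{-1},\fr n_{-2}]\subseteq\fr n_{-3}=\0$, so Jacobi gives $[[X,Y],Z]=0$; as $[\fr n_{-1},\fr n_{-1}]=\ang W$ this means $[W,\fr n_{-2}]=\0$. Now $C^1\fr n=\ang W\o C^2\fr n$, so $C^2\fr n=[\fr n,C^1\fr n]=[\fr n,W]+[\fr n,C^2\fr n]$; the $\fr n_{-4}$-component of $[\fr n,W]$ can only come from $[\fr n_{-2},W]=\0$, and since $C^2\fr n=\bo_{i\le-4}\fr n_i$ every bracket of $\fr n$ with $C^2\fr n$ lands in degree $\le-5$, so the $\fr n_{-4}$-component of $[\fr n,C^2\fr n]$ is $\0$ as well. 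Hence $C^2\fr n$ has trivial $\fr n_{-4}$-component, contradicting $\fr n_{-4}\ne\0$ with $\fr n_{-4}\subseteq C^2\fr n$. The main obstacle is thus the structural bookkeeping that upgrades \cref{10.5} to the graded description above (in particular that $\fr n_{-3}=\0$ and $\fr n_{-4}\ne\0$); once that is in place, the Jacobi computation that kills the top of $C^2\fr n$ is short, and it works uniformly for all $m\ge p+3$.
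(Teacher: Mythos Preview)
Your argument is correct and follows essentially the same route as the paper: for $p=1$ you invoke \cref{mainthm}, for $p\ge 2$ you feed \cref{10.5} into a dimension count to pin down the graded shape $\fr n_{-1}\oplus\fr n_{-2}\oplus\fr n_{-4}\oplus\cdots$ (with $[\fr n_{-1},\fr n_{-1}]=\ang W$ one-dimensional and $\fr n_{-3}=\0$), and then the Jacobi identity kills $[W,\fr n_{-2}]$ and hence the degree $-4$ piece of $C^2\fr n$, exactly the contradiction the paper obtains with its elements $X_s,X_t,X_k,X_{p+2},X_{p+3}$. Your ``if'' direction via the decomposition $\fr n\cong\fr n_{2t+1}(\bb C)\oplus\bb C^{s}$ is a clean repackaging of the paper's explicit computation on the $(m-2)$-filiform models $\fr n_m^q$.
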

\begin{proof}
We assume that $\dim \fr{n}=m\geq p+3.$ By \cref{mainthm}, it suffices to consider the cases where $p\geq 2.$ Thus, by \cref{10.5}, we conclude that $\dim \fr{n}_{-2}=\dim H_2^1+1.$ Write
\ali{\fr{n}=\ang{X_1,\dots X_{2l}}_{-1}\o\ang{X_{2l+1},\dots, X_{p+2}}_{-2}\o\fr{n}_{i_3}\o\cdots\o\fr{n}_{i_k}} 
where $X_{p+2}\notin C^1\fr{n}$ and $X_{p+2}\in C^1\fr{n}$. Since $m-p-2\leq k-2\leq \dim (\fr{n}_{i_3}\o\cdots \o\fr{n}_{i_k})=m-(p+2),$ we have $k=m-p$ and $\dim\fr{n}_{i_3}=1.$ If there exists a pair $X_i$ and $X_j$ such that $1\leq i\leq 2l,$ $2l+1\leq j\leq p+1$ and $[X_i,X_j]\neq 0,$ we have $i_3=-3.$ This contradicts {\bf (H)}. Thus we have $[X_i,X_j]=0\ \text{for }1\leq i\leq 2l,2l+1\leq j\leq p+1.$ Thus we see that $i_3=-4$. Since $X_{p+2}\in C^1\fr{n}$, there exist $X_s,X_t\in \fr{n}_{-1}$ such that $[X_s,X_t]=C_{st}^{p+2}X_{p+2}\neq 0.$ Thus we can write
\ali{\fr{n}=\ang{X_1,\dots X_{2l}}_{-1}\o\ang{X_{2l+1},\dots, X_{p+2}}_{-2}\o\ang{X_{p+3}}_{-4}\o\cdots.} Since $C^2\fr{n}=\fr{n}_{i_3}\o\cdots \o\fr{n}_{i_{m-p}}$ and $C^3\fr{n}=\fr{n}_{i_4}\o\cdots \o\fr{n}_{i_{m-p}}$, we have $X_{p+3}\in C^2\fr{n}$ and $X_{p+3}\notin C^3\fr{n}$. Thus we can take an element $X_k\in\fr{n}_{-2}$ such that $[X_k,X_{p+2}]=C_{kp+2}^{p+3}X_{p+3}\neq 0$ for $2l+1\leq k\leq p+1.$ By the Jacobi identity, we have 
\ali{0=[X_s,[X_t, X_k]]&=[[X_s,X_t],X_k]+[X_t,[X_s,X_k]]\\
&=C_{st}^{p+2}C_{kp+2}^{p+3}\neq 0.}
This is a contradiction. Therefore $m<p+3$. Thus $p=\dim\fr{n}-1$ or $\dim\fr{n}-2$. By the definition of $p$-filiform Lie algebras, it follows that $\fr{n}$ is either abelian or $2$-step nilpotent. Conversely, we construct a grading satisfying {\bf (W)} and {\bf (H)} on a Lie algebra $\fr{n}_m^q$ for all $m$ and $q$ such that $\frac{m-3}{2}\leq q<\frac{m-1}{2}$(see \cref{m-2fili} or \cite{pfiliform}). Fix such integers $m$ and $q$. We define the grading on $\fr{n}_m^q$ by
\ali{\fr{n}_m^q=
\ang{X_0,X_1,Y_1,\dots, Y_{2q-3},Y_{2q-2}}_{-1}\o\ang{Y_{2q-1},Y_{2q},\dots, Y_{m-3},X_2}_{-2}}
Then the induced first and second cohomologies are as follows:
\ali{
H^1(\fr{n}_m^q)&=H_1^1\o H_2^1\\
H^2(\fr{n}_m^q)&=H_2^1\o H_3^2\o H_4^2
}
where the spaces $H_{*}^1$ and $H_{*}^2$ are given by 
\ali{
H_1^1&=\ang{[x_i],[y_j]\mid i=0,1,1\leq j\leq 2q-2}\\
H_2^1&=\ang{[y_i]\mid 2q-1\leq i\leq m-3}
}
and
\ali{
H_2^2&=\ang{x_0\w x_1,x_i\w y_j,y_k\w y_l\mid i=0,1, 1\leq j\leq 2q-2,1\leq k<l\leq 2q-2}/\ang{dx_2}\\
H_3^2&=\ang{[x_i\w y_j],[y_k\w y_j]\mid i=0,1,2q-1\leq j\leq m-3, 1\leq k\leq 2q-2}\\
H_4^2&=\ang{[y_i\w y_j]\mid 2q-1\leq i<j\leq m-3}
.}
Thus we see that $\dim H_1^1=2+2q-2=2q\in 2\bb{Z}$ and $\dim H_3^2=2(m-3-(2q-2))+(2q-2)(m-3-(2q-2))=(2+2q-2)(m-2q-1)=2q(m-2q-1)\in 2\bb{Z}.$
Therefore the grading satisfies {\bf (W)} and {\bf (H)}. For the cases that $\dim \fr{n}=1,2,3$, we give in \cref{abelians},\cref{Heisengrad} and it follows from \cref{trivialextension}. 
\end{proof}
\begin{cor}\label{Cor503}
Let $N$ be a simply connected nilpotent Lie group and $\fr{n}$ its Lie algebra. Let $\Gamma$ be a lattice of $N$. If $\fr{n}$ is a $p$-filiform Lie algebra such that $\dim \fr{n}\geq p+3$, then $\Gamma$ is not isomorphic to the fundamental group of any smooth complex algebraic variety. 
\end{cor}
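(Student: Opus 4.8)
\emph{Proof proposal.} This is a direct consequence of the two results already established, \cref{pfiliformMMHS} and \cref{fundamentallattice}, and the plan is simply to chain them. First I would pass to the complexification $\fr{n}_{\bb{C}}$: it is again $p$-filiform, since $\dim C^i\fr{n}_{\bb{C}}=\dim C^i\fr{n}$ for all $i$, and in particular $\dim\fr{n}_{\bb{C}}=\dim\fr{n}\geq p+3$. By the contrapositive of the ``only if'' direction of \cref{pfiliformMMHS}, it then follows that $\fr{n}_{\bb{C}}$ admits \emph{no} grading $\fr{n}_{\bb{C}}=\bigoplus_{i\leq-1}\fr{n}_i$ satisfying both {\bf (W)} and {\bf (H)}.

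Next I would feed this into \cref{fundamentallattice}. The standing hypotheses there are precisely the ones we have --- $N$ simply connected nilpotent with Lie algebra $\fr{n}$, and $\Gamma$ a lattice of $N$ --- so its conclusion applies verbatim: since $\fr{n}$ (equivalently $\fr{n}_{\bb{C}}$) admits no grading satisfying {\bf (W)} and {\bf (H)}, the group $\Gamma$ is not isomorphic to the fundamental group of any smooth complex algebraic variety. Unwinding \cref{fundamentallattice}, this amounts to the following contradiction argument: if $\Gamma\cong\pi_1(M,x)$ for some smooth complex algebraic variety $M$, then \cite{Malcev} identifies the Lie algebra of the $\bb{R}$-Malcev completion of $\Gamma$ with $\fr{n}$, and \cref{WH} produces a grading of $\fr{n}_{\bb{C}}$ with properties {\bf (W)} and {\bf (H)} --- which \cref{pfiliformMMHS} forbids once $\dim\fr{n}\geq p+3$.

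There is no genuine obstacle at this stage. All of the work --- the split into the cases $i_1=-1$ and $i_1=-2$, the dimension count forcing $\dim\fr{n}_{-2}=\dim H_2^1+1$, and the Jacobi-identity contradiction --- has already been carried out in \cref{mainthm}, \cref{10.5} and the proof of \cref{pfiliformMMHS}, which in turn rests on Morgan's \cref{WH}. The only point requiring (routine) care is the observation that $p$-filiformness and the inequality $\dim\fr{n}\geq p+3$ are unaffected by complexification, so that \cref{pfiliformMMHS}, whose supporting propositions are phrased over $\bb{C}$, may legitimately be applied to $\fr{n}_{\bb{C}}$.
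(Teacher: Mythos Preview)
Your proposal is correct and follows essentially the same route as the paper: chain \cref{pfiliformMMHS} (no grading satisfying {\bf (W)} and {\bf (H)} exists once $\dim\fr{n}\geq p+3$) with \cref{fundamentallattice}/\cref{WH} to obtain the conclusion by contradiction. Your explicit remark that $p$-filiformness and the inequality $\dim\fr{n}\geq p+3$ are preserved under complexification is a technical point the paper leaves implicit.
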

\begin{proof}
Assume that $\Gamma$ is isomorphic to a fundamental group of a any smooth complex algebraic variety. By \cref{WH}, we see that $\fr{n}$ admits a grading satisfying {\bf (W)} and {\bf (H)}. By \cref{pfiliformMMHS}, we have $p+3\leq \dim\fr{n}<p+3$. This contradicts our assumption. Thus we conclude that $\fr{n}$ does not admit such a grading. Hence, by \cref{fundamentallattice}, the Corollary holds. 
\end{proof}
\begin{rem}
According to the \cref{pfiliformMMHS}, it immediately follows that the Lie algebra $\fr{n}_m^{\bullet}=\ang{X_0,X_1,X_2,X_3,Y_1,\dots,Y_{m-4}}$ given in \cref{m-3fili} does not admit a grading satisfying {\bf (W)} and {\bf (H)} for $m\geq 5$ because $\fr{n}_m^{\bullet}$ is $(m-3)$-fliform. However, since $\fr{n}_5^2$ in \cref{m-2fili} is $2$-step, we cannot conclude immediately whether the Lie algebra $\fr{n}_5^2$ admits a grading such that  {\bf (W)} and {\bf (H)} hold.
\end{rem}
\begin{rem}\label{Cor501}
Let $\fr{n}$ be a $p$-filiform Lie algebra. Assume that $\fr{g}:=\fr{n}\o\bb{C}^n$ admits a grading $\fr{g}=\bo_{i\leq -1}\fr{n}_{i}$ which satisfies {\bf (W)} and {\bf (H)}. Since $\fr{g}$ is $(p+n)$-filiform Lie algebra, we have
\ali{(p+3)+n\leq \dim \fr{g}<p+n+3}
by \cref{pfiliformMMHS}. This contradicts the assumption.
Thus, we conclude that any direct sum decomposition of a $p$-filiform Lie algebra $\fr{n}$ such that $\dim\fr{n}\geq p+3$ and an abelian Lie algebra $\bb{C}^n$, $\fr{g}:=\fr{n}\o \bb{C}^n$ does not admit a grading satisfying {\bf (W)} and {\bf (H)}. This means that the direct product of the abelian group $\bb{Z}^n$ and any lattice $\Gamma$ in a simply-connected Lie group whose Lie algebra is $p$-filiform such that the dimension is greater than or equal to $p+3$, $\Gamma \times \bb{Z}^n$ is not isomorphic to the fundamental group of any smooth complex algebraic variety for all $n\geq 1$.  
\end{rem}
\begin{rem}
Let $\fr{n}=\bo_{i\in\bb{Z}}\fr{n}_i$ be a graded Lie algebra. The condition {\bf (H)} is satisfied by reindexing as $\fr{n}_{2i}':=\fr{n}_i$ for all $i\in\bb{Z}$, so that $\fr{n}=\bo_{i\in\bb{Z}}\fr{n}_{2i}'.$ Thus if we focus on only one of either {\bf (W)} or {\bf (H)}, then it becomes important to consider whether a nilpotent Lie algebra admits a grading satisfying {\bf (W)}.
\end{rem}
\section{Examples, Remarks and Low dimensional cases}
\subsection{A class of explicit counterexamples obtained by filiform Lie algebras}\label{filieg}
\begin{eg}\label{filieg}
In \cite{Nil}, some important example of filiform Lie algebras are given. In this section, we give some explicit examples which cannot be isomorphic to fundamental groups of smooth complex algebraic varieties obtained by some impotant example of filiform Lie algebras.
For integers $n\geq 3$ and $m\geq 5$, we define the Lie algebras $\mathcal{L}_n$, $\mathcal{R}_m$ and $\mathcal{Q}_{2m}$ by
\ali{\mathcal{L}_n=\ang{X_1,\dots,X_n}:&[X_1,X_i]=X_{i+1}&&(2\leq i\leq n-1)\\
\mathcal{R}_m=\ang{X_1,\dots,X_m}:&[X_1,X_i]=X_{i+1}&&(2\leq i\leq m-1)\\
&[X_2,X_j]=X_{j+2}&&(3\leq j\leq m-2)\\
\mathcal{Q}_{2m}=\ang{X_1,\dots,X_{2m}}:&[X_1,X_i]=X_{i+1}&&(2\leq i\leq 2m-1)\\
&[X_j,X_{2m-j+1}]=(-1)^{j+1}X_m&&(1\leq j\leq m).}
By \cref{mainthm}, these Lie algebras does not admit a gradings satisfying {\bf (W)} and {\bf (H)}. Thus, by \cref{Cor503} and \cref{Cor501}, we deduce that the groups $L_n\times \bb{Z}^l$, $R_m\times \bb{Z}^l$ and $Q_{2m}\times \bb{Z}^l$ are not isormorphic to fundamental groups of smooth complex algebraic varieties for any $n\geq 4$, $m\geq 5$ and $l\geq 0$ where the groups $L_n$, $R_m$ and $Q_{2m}$ are presented as follows:
\ali{
L_n&:=\left \langle x_1,\dots,x_n \middle\vert
\begin{array}{l}
[x_1, x_i]=x_{i+1}\ (1\leq i\leq n-1)\\
1=[x_1, x_n]=[x_j, x_k]\ (2\leq j<k\leq n)
\end{array}
\right \rangle \\
R_m&:= \left \langle x_1,\dots,x_m \middle\vert
\begin{array}{l}
[x_1, x_i]=x_{i+1}, [x_2, x_j]=x_{j+2}\\
(2\leq i\leq m-1, 3\leq j\leq m-2)\\
1=[x_1, x_m]=[x_2, x_{m-1}]=[x_2, x_m]=[x_k, x_l]\\
 (3\leq k<l\leq m)
\end{array}
\right \rangle \\
Q_{2m}&:= \left \langle x_1,\dots,x_{2m} \middle\vert
\begin{array}{l}
[x_1, x_i]=x_{i+1}, [x_j, x_{2m-j+1}]=x_{2m}^{(-1)^{j+1}}\\
(2\leq i\leq 2m-1, 1\leq j\leq m)\\
1=[x_1, x_m]=[x_k, x_l]\ (2\leq k<l\leq 2m-1, k+l\neq 2m+1)
\end{array}
\right \rangle.
}
\end{eg}
\subsection{The Low-dimensional case}
\subsubsection{Examples up to dimension $5$}\label{5dim}
\begin{rem}\label{L58}
The converse of the main theorem does not hold. There is a $2$-step nilpotent Lie algebra of dimension five which do not admit Morgan's mixed Hodge structures. The corresponding groups are the following:
\ali{\Gamma_{5,8}:= \left \langle x_1,x_2,x_3,x_4,x_5 \middle\vert
\begin{array}{l}
[x_1, x_2]=x_{3},[x_1,x_4]=x_5\\
1=[x_i,x_j]\ (\text{ for other }i<j)
\end{array}
\right \rangle
}We explain the outline of the proof of non-existence of Morgan's mixed Hodge structure on the Lie algebra $L_{5,8}$(Lie brackets are in {\sc Table1}). The Lie algebra $L_{5,8}$ is not $p$-filiform for any $p$. However, in the same way as \cref{mainthm}, we can show that $L_{5,8}$ does not admit a grading satisfying the conditions {\bf (W)} and {\bf (H)}. We provide an outline of the proof as follows:

If there exists a grading $L_{5,8}=\fr{n}_{i_1}\o \fr{n}_{i_2}\o\cdots \o\fr{n}_{i_k}$ such that the conditions {\bf (W)} and {\bf (H)} hold, then {\bf (W)} implies that $i_1=-1$ or $-2$. Suppose that $i_1=-1.$ By {\bf (H)}, it follows that $\dim \fr{n}_{-1}=2$ because $\dim H^1(L_{5,8})=3.$ By {\bf (W)}, there exists an element $0\neq Y_3\in \fr{n}_{-2}$ such that $Y_3\notin C^1\fr{n}$. Let $\fr{n}_{-1}=\ang{Y_1,Y_2}.$ If $[Y_1,Y_2]\neq 0$, then, by {\bf (H)}, we have $\ang{[Y_1,Y_2],Y_3}=\fr{n}_{-2}$, $0\neq [[Y_1,Y_2],Y_3]\in \fr{n}_{-4}$ and $[Y_i,Y_3]=0=[Y_i,[Y_1,Y_2]]$ for $i=1,2$.

Thus we obtain the grading $L_{5,8}=\ang{Y_1,Y_2}_{-1}\o\ang{Y_3,[Y_1,Y_2]}_{-2}\o\ang{[[Y_1,Y_2],Y_3]}_{-4}.$
Since $L_{5,8}$ is $2$-step, we have $[[Y_1,Y_2],Y_3]=0$. This contradicts the fact that $\dim L_{5,8}=5.$ Thus $[Y_1,Y_2]=0$ and $\fr{n}_{-2}=\ang{Y_3}.$ Since $L_{5,8}$ is non-abelian, it follows that $Y_4=[Y_1,Y_3]\neq 0$ and $Y_5=[Y_2,Y_3]\neq 0.$ By {\bf (H)}, we have $\fr{n}_{-3}=\ang{Y_4,Y_5}$. Let $y_1,y_2,y_3,y_4$ and $y_5$ be the dual basis of $Y_1,Y_2,Y_3,Y_4$ and $Y_5$. Then we have $d(y_3\w y_5)=0$ and $H_5^2\neq \0 .$ Thus we conclude that $i_1= -2.$ In this case, we can show existence of a non-zero element in $H_6^2$ or $H_8^2.$  
\end{rem}
\begin{eg}
We consider the gradings of $p$-filiform Lie algebras of dimension up to $5$ for $p=1,2$. We use the classification given in  \cite{nilpotent}. We omit the decomposable Lie algebras, i.e. the Lie algebras which are direct sums of proper ideals. {\sc Table $1$} shows the classification of the indecomposable nilpotent Lie algebras of dimension up to $5$. {\sc Table $2$} summarizes whether the nilpotent Lie algebras of dimension up to $5$ satisfy only condition {\bf (W)} or both {\bf (W)} and {\bf (H)}. The examples of gradings satisfying each condition are provided as well. By \cref{trivialextension} and \cref{Cor501}, we do not need to consider a grading of $\fr{n}\o\bb{C}^n$ for any $\fr{n}$ which admits a grading satisfying {\bf (W)} and {\bf (H)}. 
\end{eg}
\begin{prob}
It is well-known that $\bb{C}-\0$, $(\bb{C}-\0)\times (\bb{C}-\0)$ and $(H_3(\bb{R})/\Gamma)\times \bb{R}_{>0}$ have structure of smooth complex algebraic varieties and Malcev completions of its fundamental groups are isomorphic to respectively $L_{1,1}$, $L_{2,1}$ and $L_{3,2}$ where $\Gamma$ is a lattice in the Heisenberg group $H_3(\bb{R}).$ Can we construct a any smooth complex algebraic variety $M$ such that the fundamental group $\pi_1(M,x)$ is a lattice in a simply connected nilpotent Lie group whose Lie algebra is isomorphic to $L_{5,9}$?
\end{prob}
By the above examples and \cref{Cor501}, we can state that the groups $\Gamma_{4,3}\times \bb{Z}^n$, $\Gamma_{5,5}\times \bb{Z}^n$,$\Gamma_{5,6}\times \bb{Z}^n$, $\Gamma_{5,7}\times \bb{Z}^n$ and $\Gamma_{5,8}\times \bb{Z}^n$ are not isomorphic to fundamental groups of smooth complex algebraic varieties for any integer $n\geq 0$ where $\Gamma_{\bullet,\bullet}$ is presented as follows:
\ali{
\Gamma_{4,3}&:= \left \langle x_1,x_2,x_3,x_4 \middle\vert
\begin{array}{l}
[x_1, x_i]=x_{i+1}\ (i=2,3)\\
1=[x_1, x_4]=[x_j, x_k]\ (2\leq j<k\leq 4)
\end{array}
\right \rangle \\
\Gamma_{5,5}&:= \left \langle x_1,x_2,x_3,x_4,x_5 \middle\vert
\begin{array}{l}
[x_1, x_3]=x_{4},[x_1,x_4]=x_5, [x_3,x_2]=x_5\\
1=[x_1, x_2]=[x_1,x_5]=[x_2, x_4]=[x_2,x_5]=[x_i,x_j]\ (3\leq i<j\leq 5)
\end{array}
\right \rangle \\
\Gamma_{5,6}&:= \left \langle x_1,x_2,x_3,x_4,x_5 \middle\vert
\begin{array}{l}
[x_1, x_i]=x_{i+1}\ (i=2,3,4),[x_2,x_3]=x_5\\
1=[x_1,x_5]=[x_2, x_4]=[x_2,x_5]=[x_i,x_j]\ (3\leq i<j\leq 5)
\end{array}
\right \rangle \\
\Gamma_{5,7}&:= \left \langle x_1,x_2,x_3,x_4,x_5 \middle\vert
\begin{array}{l}
[x_1, x_i]=x_{i+1}\ (i=2,3,4)\\
1=[x_1,x_5]=[x_i,x_j]\ (2\leq i<j\leq 5)
\end{array}
\right \rangle \\
\Gamma_{5,8}&:= \left \langle x_1,x_2,x_3,x_4,x_5 \middle\vert
\begin{array}{l}
[x_1, x_2]=x_{3},[x_1,x_4]=x_5\\
1=[x_1, x_3]=[x_1,x_5]=[x_i,x_j]\ (2\leq i<j\leq 5)
\end{array}
\right \rangle
.}
\\

\subsubsection{The six-dimensional case}\label{6dim}
\begin{rem}
Using the classification of nilpotent Lie algebras of dimension up to $6$(see \cite{Nil},\cite{Grnil} or \cite{classnil}), we give a sufficient condition for finitely generated torsion-free nilpotent groups corresponding to  $6$-dimensional Lie algebras not to be isomorphic to the fundamental groups of smooth complex algebraic varieties. {\sc Table $3$} shows that the classification of $\bb{C}$-nilpotent Lie algebras of dimension $6$. As in {\sc Table 2}, {\sc Table 4} compares gradings that satisfy conditions {\bf (W)} and {\bf (H)} in the six-dimensional case.

The Lie algebras $L_{6,19}$ through $L_{6,28}$ are not $p$-filiform for any $p$. Thus we have to consider whether they admit certain gradings individually. We can explicitly construct such gradings on $L_{6,21}(-1)$, $L_{6,22}(0)$, $L_{6,22}(1)$, $L_{6,24}(0)$ and $L_{6,24}(1)$ (see {\sc Table $4$}). We outline a proof of non-existence of Morgan's mixed Hodge structure on the remaining Lie algebras.  We can show that $L_{6,26}$ does not admit Morgan's mixed Hodge structure in the same way as in \cref{L58}. Moreover, if a $6$-dimensional $3$-step nilpotent Lie algebra $\fr{n}$ admits a grading such that {\bf (W)} and {\bf (H)} hold and $\dim H^1(\fr{n})=3$, then it follows that the types of gradings on $\fr{n}$ can be classified as follows:
\ali{\fr{n}&=\ang{Y_1,Y_2}_{-1}\o\ang{Y_3}_{-2}\o\ang{Y_4,Y_5}_{-3}\o\ang{Y_6}_{i_4}\\&: [Y_1,Y_2]=Y_3,[Y_1,Y_3]=Y_4,[Y_2,Y_3]=Y_5,\fr{n}_{i_4}\neq \0,\\  
\fr{n}&=\ang{Y_1,Y_2,Y_3}_{-2}\o\ang{Y_4}_{-4}\o\ang{Y_5}_{-6}\o\ang{Y_6}_{i_4}\\&:Y_4=[Y_i,Y_j]\neq 0,Y_5=[Y_k,Y_4],\fr{n}_{i_4}\neq \0\text{ for some }i,j,k\in \{1,2,3\},\\
\fr{n}&=\ang{Y_1,Y_2,Y_3}_{-2}\o\ang{Y_4}_{-4}\o\ang{Y_5,Y_6}_{-6}\\
&:Y_4=[Y_i,Y_j]\neq 0,Y_5=[Y_k,Y_4],Y_6=[Y_l,Y_4]\neq 0\text{ for some }i,j,k,l\in \{1,2,3\},\\ 
\fr{n}&=\ang{Y_1,Y_2,Y_3}_{-2}\o\ang{Y_4,Y_5}_{-4}\o\ang{Y_6}_{i_3}\\
&:Y_4=[Y_i,Y_j]\neq 0,Y_5=[Y_k,Y_l]\fr{n}_{i_4}\neq \0\text{ for some }i,j,k,l\in \{1,2,3\}.} 
We see that there exists a non-zero element such that the degree is greater than $4$ in the second cohomology of $\fr{n}$ in the above cases. This contradicts the condition {\bf (W)}. Thus the Lie algebras $L_{6,19}$, $L_{6,23}$, $L_{6,25}$, $L_{6,26}$ and $L_{6,27}$ do not admit a gradings satisfying the conditions {\bf (W)} and {\bf (H)}. Finally, we consider the case of $L_{6,28}.$ Note that $\dim H^2(L_{6,28})=4.$ Suppose that $L_{6,28}$ has a grading satisfying {\bf (W)} and {\bf (H)}. Then we can show that the types of gradings can be classified as follows:
\ali{
\fr{n}&=\ang{Y_1,Y_2}_{i_1}\o\ang{Y_3}_{2i_2}\o\ang{Y_4,Y_5}_{3i_1}\o\ang{Y_6}_{i_4}\\
&: [Y_1,Y_2]=Y_3,[Y_1,Y_3]=Y_4,[Y_2,Y_3]=Y_5,\fr{n}_{i_4}\neq \0,\\    
\fr{n}&=\ang{Y_1,Y_2}_{i_1}\o\ang{Y_3}_{2i_2}\o\ang{Y_4}_{3i_1}\o\ang{Y_5,Y_6}_{i_4}\\
&: [Y_1,Y_2]=Y_3,[Y_i,Y_3]=Y_4,\fr{n}_{i_4}\neq \0\text{ and }\dim \fr{n}_{i_4}=2 \text{ for some }i\in\{1,2\},\\    
\fr{n}&=\ang{Y_1,Y_2}_{i_1}\o\ang{Y_3}_{2i_2}\o\ang{Y_4}_{3i_1}\o\ang{Y_5}_{i_4}\o\ang{Y_6}_{i_5}\\
&: [Y_1,Y_2]=Y_3,[Y_i,Y_3]=Y_4,\fr{n}_{i_4},\fr{n}_{i_5}\neq \0\text{ and }\dim \fr{n}_{i_4}=1=\dim\fr{n}_{i_5} \text{ for some }i,j\in\{1,2\}. 
}
Thus we have 
\ali{\bw ^2\fr{n}^*=\ang{y_1\w y_2}_{-2i_1}\o\ang{y_1\w y_3,y_2\w y_3}_{-3i_1}\o\ang{y_1\w y_4,y_1\w y_5,y_2\w y_4,y_2\w y_5}_{-4i_1}\o\cdots,}
\ali{\bw ^2\fr{n}^*=\ang{y_1\w y_2}_{-2i_1}\o\ang{y_1\w y_3,y_2\w y_3}_{-3i_1}\o\ang{y_1\w y_4,y_2\w y_4}_{-4i_1}\o\cdots}
By using the above gradings on $\bw^2\fr{n}^*$, we have $\dim H^2(\fr{n})\leq 3.$ This contradicts that $\dim H^2(\fr{n})=4.$ 
\end{rem}
The following theorem follows from the above remarks.
\begin{thm}\label{lowdim}
Let $N$ be a simply connected nilpotent Lie group and $\fr{n}$ its Lie algebra such that $\dim\fr{n}\leq 6$. Let $\Gamma$ be a lattice of $N$. If $\fr{n}$ is isomorphic to one of the following Lie algebras:
\ali{
\text{filiform}&:L_{4,3}, L_{5,a}\ (a=6,7),\ L_{6,b}\ (14\leq b\leq 18)\\
\text{$2$-filiform}&:L_{4,3}\o\bb{C},\ L_{5,5},\ L_{5,a}\o\bb{C},\ L_{6,c}\ (11\leq c\leq 13)\\
\text{$3$-filiform}&:L_{4,3}\o\bb{C}^2,\ L_{5,5}\o\bb{C},\ L_{6,10}\\
\text{otherwise}&:L_{5,8},\ L_{6,19}(-1),\ L_{6,20},\ L_{6,23},\ L_{6,d}\ (25\leq d\leq 28)
}
then $\Gamma$ is not isomorphic to the fundamental group of any smooth complex algebraic variety. 
\end{thm}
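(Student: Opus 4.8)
The plan is to reduce everything to \cref{fundamentallattice}: it suffices to show that none of the Lie algebras in the list admits a grading $\fr{n}_{\bb{C}}=\bo_{i\leq -1}\fr{n}_i$ satisfying {\bf (W)} and {\bf (H)}. The entries in the first three rows are $p$-filiform with $p=1,2,3$ respectively, and for each of them one reads off from the classification (\cref{6dim}, or \cite{nilpotent}, \cite{classnil}) that $\dim\fr{n}\geq p+3$; then \cref{pfiliformMMHS} immediately rules out a grading satisfying {\bf (W)} and {\bf (H)}. For the trivial extensions $L_{4,3}\o\bb{C}$, $L_{4,3}\o\bb{C}^2$, $L_{5,a}\o\bb{C}$ and $L_{5,5}\o\bb{C}$ one uses, as in \cref{Cor501}, that a $p$-filiform Lie algebra tensored with $\bb{C}^n$ is $(p+n)$-filiform, so the same dimension count applies. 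This disposes of all the filiform, $2$-filiform and $3$-filiform entries at once.

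The remaining Lie algebras $L_{5,8}$, $L_{6,19}(-1)$, $L_{6,20}$, $L_{6,23}$, and $L_{6,25}$ through $L_{6,28}$ are not $p$-filiform for any $p$, so \cref{pfiliformMMHS} does not apply and each is treated individually, following the method of \cref{mainthm} and the outline in \cref{L58}. For a fixed one of these I would start from a hypothetical grading $\fr{n}=\fr{n}_{i_1}\o\cdots\o\fr{n}_{i_k}$ satisfying {\bf (W)} and {\bf (H)}; {\bf (W)} forces $i_1\in\{-1,-2\}$ and constrains $\dim\fr{n}_{-1}$ and $\dim\fr{n}_{-2}$ through $H^1(\fr{n})=H_1^1\o H_2^1$, while {\bf (H)} forces $\dim\fr{n}_{i_j}$ to be even whenever $i_j$ is odd. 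Combining these with the explicit Lie bracket relations ({\sc Table 3} in \cref{6dim}), the step-length bound of \cref{gradingstep}, and the Jacobi identity, one shows that only finitely many ``shapes'' of grading are possible, and then computes the induced grading on $H^2(\fr{n})$ via the Chevalley--Eilenberg differential on $\bw^2\fr{n}^*$. In every admissible shape either a nonzero class appears in $H_k^j$ with $k>4$ --- contradicting the second half of {\bf (W)}, namely $H^2(\fr{n})=H_2^2\o H_3^2\o H_4^2$ --- or a parity obstruction from {\bf (H)} arises, or (for $L_{6,28}$) the resulting bound $\dim H^2(\fr{n})\leq 3$ contradicts $\dim H^2(L_{6,28})=4$.

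The main obstacle is the six-dimensional non-filiform list: unlike the $p$-filiform case there is no single structural invariant to exploit, so one must enumerate the degree sequences compatible with {\bf (H)} and \cref{gradingstep} and verify an $H^2$ computation in each. The most delicate is $L_{6,28}$, where the three admissible grading types must each be analyzed and the differential on $\bw^2\fr{n}^*$ computed weight by weight to obtain the sharp bound $\dim H^2(\fr{n})\leq 3$; the others ($L_{6,19}(-1)$, $L_{6,20}$, $L_{6,23}$, $L_{6,25}$, $L_{6,26}$, $L_{6,27}$) reduce to exhibiting, in each grading shape, an explicit cocycle in $\bw^2\fr{n}^*$ of weight $5$, $6$ or $8$ that is not a coboundary, exactly as in \cref{mainthm} and \cref{L58}. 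Once each of these Lie algebras is shown to admit no grading satisfying {\bf (W)} and {\bf (H)}, \cref{fundamentallattice} yields the theorem.
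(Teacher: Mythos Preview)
Your proposal is correct and follows essentially the same approach as the paper: reduce to \cref{fundamentallattice}, dispatch the $p$-filiform entries (and their trivial extensions) via \cref{pfiliformMMHS} and \cref{Cor501}, and handle the remaining non-$p$-filiform algebras by the case-analysis method of \cref{mainthm} and \cref{L58}, including the special $\dim H^2\leq 3$ argument for $L_{6,28}$. The only organizational difference is that the paper treats most of the six-dimensional non-$p$-filiform cases ($L_{6,19}(-1)$, $L_{6,20}$, $L_{6,23}$, $L_{6,25}$, $L_{6,27}$) in one stroke by first observing they are all $3$-step with $\dim H^1=3$, then classifying the possible grading shapes once for this class, whereas you propose to do each algebra separately; either route works.
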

\newpage
\centering{{\bf {\sc Table $1$} Classification of $\bb{C}-$nilpotent Lie algebras in dimension up to $5$}
}
\begin{longtable}{|c|l|l|}

\toprule
          &   & Lie brackets  \\ \midrule
dim$=1$&$L_{1,1}=\bb{C}$          &abelian \\ \midrule
dim$=2$&$L_{2,1}=\bb{C}^2$          &abelian \\ \midrule
dim$=3$&$L_{3,1}=\bb{C}^3$          &abelian \\ 
&$L_{3,2}$          &$[X_1,X_2]=X_3$ \\ \midrule
dim$=4$&$L_{4,1}=\bb{C}^4$          &abelian \\ 
&$L_{4,2}=L_{3,2}\o\bb{C}$          &$[X_1,X_2]=X_3,[Y_4,X_i]=0\text{ for all }i$ \\ 
&$L_{4,3}$          &$[X_1,X_2]=X_3, [X_1,X_3]=X_4$ \\ \midrule
dim$=5$&$L_{5,1}=\bb{C}^4$          &abelian \\ 
&$L_{5,2}=L_{3,2}\o\bb{C}^2$          &$[X_1,X_2]=X_3,[Y_4,X_i]=0=[Y_5,X_i]=[Y_4,Y_5]\text{ for all }i$ \\ 
&$L_{5,3}=L_{4,3}\o\bb{C}$          &$[X_1,X_2]=X_3, [X_1,X_3]=X_4, [Y_5,X_i]=0\text{ for all }i$ \\
&$L_{5,4}$          &$[X_4,X_1]=X_5, [X_2,X_3]=X_5$ \\ 
&$L_{5,5}$          &$[X_1,X_3]=X_4, [X_1,X_4]=X_5,[X_3,X_2]=X_5$ \\ 
&$L_{5,6}$          &$[X_1,X_2]=X_3, [X_1,X_3]=X_4,[X_1,X_4]=X_5,[X_2,X_3]=X_5$ \\ 
&$L_{5,7}$          &$[X_1,X_2]=X_3, [X_1,X_3]=X_4,[X_1,X_4]=X_5$ \\ 
&$L_{5,8}$          &$[X_1,X_2]=X_3, [X_1,X_4]=X_5$ \\ 
&$L_{5,9}$          &$[X_1,X_2]=X_3, [X_2,X_3]=X_4,[X_1,X_3]=X_5$ \\ \bottomrule
\end{longtable}
\centering{{\bf {\sc Table $2$} Comparing the condition {\bf (W)} and {\bf (H)} in dimension up to $5$}}
\begin{longtable}{|l|c|c|l|l|}
\toprule
          &\hspace{-0.cm}{\bf (W)} & {\bf (W) and (H)} & grading satisfying {\bf (W)} &{\bf (W)} and {\bf (H)} \\ \midrule
$L_{1,1}=\bb{C}=\ang{X}$ &$\bigcirc$            & $\bigcirc$                                                          & $\ang{X}_{-2}$ &Same as {\bf (W)} \\ \midrule
$L_{2,1}=\bb{C}^2$ &$\bigcirc$            & $\bigcirc$                                                          & $\ang{X_1,X_2}_{-1}$ & Same as {\bf (W)}\\ \midrule
$L_{3,1}=\bb{C}^3$ &$\bigcirc$            & $\bigcirc$                                                          & $\ang{X_1,X_2}_{-1}\o\ang{X_3}_{-2}$ &Same as {\bf (W)} \\ 
$L_{3,2}$(filiform) &$\bigcirc$            & $\bigcirc$                                                          & $\ang{X_1,X_2}_{-1}\o \ang{X_3}_{-2}$ & Same as {\bf (W)} \\ \midrule
$L_{4,1}=\bb{C}^4$ &$\bigcirc$            & $\bigcirc$                                                          & $\ang{X_1,X_2,X_3,X_4}_{-1}$ &Same as {\bf (W)} \\ 
$L_{4,2}$($2$-filiform) &$\bigcirc$            & $\bigcirc$                                                          & $\ang{X_1,X_2}_{-1}\o \ang{X_3,Y_4}_{-2}$ & Same as {\bf (W)} \\
$L_{4,3}$(filiform) & $\bigcirc$     & $\bigtimes$                                                          & $\ang{X_1,X_2}_{-1}\o \ang{X_3}_{-2}\o \ang{X_4}_{-3}$ &  \\ \midrule
$L_{5,1}=\bb{C}^4$ &$\bigcirc$            & $\bigcirc$                                                          & $\ang{X_1,X_2,X_3,X_4}_{-1}\o\ang{X_5}_{-2}$ &Same as {\bf (W)} \\ 
$L_{5,2}$($3$-filiform) &$\bigcirc$            & $\bigcirc$                                                          & $\ang{X_1,X_2}_{-1}\o \ang{X_3,Y_4,Y_5}_{-2}$ & Same as {\bf (W)} \\
$L_{5,3}$($2$-filiform) & $\bigcirc$     & $\bigtimes$                                                          & $\ang{X_1,X_2}_{-1}\o \ang{X_3,Y_5}_{-2}\o \ang{X_4}_{-3}$ &  \\ 
$L_{5,4}$    & $\bigcirc$                           &  $\bigcirc$                                                         & $\ang{X_1,X_2,X_3,X_4}_{-1}\o \ang{X_5}_{-2}$ & Same as {\bf (W)} \\ 
$L_{5,5}$($2$-filiform)     & $\bigcirc$                           &$\bigtimes$                                                           &$\ang{X_1,X_3}_{-1}\o\ang{X_2,X_4}_{-2}\o\ang{X_5}_{-3}$  &  \\
$L_{5,6}$(filiform)    & $??$                           &$\bigtimes$                                                           &  &  \\
$L_{5,7}$(filiform)    & $??$                           &$\bigtimes$                                                           &  &  \\
$L_{5,8}$     & $\bigcirc$                           &$\bigtimes$                                                           & $\ang{X_1,X_2,X_4}_{-1}\o\ang{X_3,X_5}_{-2}$ &  \\
$L_{5,9}$    & $\bigcirc$                           &$\bigcirc$                                                           & $\ang{X_1,X_2}_{-1}\o \ang{X_3}_{-2}\o \ang{X_4,X_5}_{-3}$ & Same as {\bf (W)}\\ \bottomrule
\end{longtable}
 \newpage
\centering{{\bf {\sc Table $3$} Classification of $\bb{C}-$nilpotent Lie algebras in dimension $6$}
}
\begin{longtable}{|c|l|}

\toprule
             & Lie brackets  \\ \midrule
$L_{6,1}=\bb{C}^6$          &abelian \\ 
$L_{6,2}=L_{3,2}\o\bb{C}^3$          &$[X_1,X_2]=X_3,[Y_4,X_i]=0=[Y_5,X_i]=[Y_6,X_i]=[Y_j,Y_k]\text{ for all }i\text{ and }4\leq j<k\leq 6$ \\ 
$L_{6,3}=L_{4,3}\o\bb{C}^2$          &$[X_1,X_2]=X_3, [X_1,X_3]=X_4, [Y_5,X_i]=0=[Y_6,X_i]\text{ for all }i$ \\
$L_{6,4}=L_{5,4}\o\bb{C}$          &$[X_4,X_1]=X_5, [X_2,X_3]=X_5,[Y_6,X_i]=0\text{ for all }i$ \\ 
$L_{6,5}=L_{5,5}\o\bb{C}$          &$[X_1,X_3]=X_4, [X_1,X_4]=X_5,[X_3,X_2]=X_5,[Y_6,X_i]=0\text{ for all }i$ \\ 
$L_{6,6}=L_{5,6}\o\bb{C}$          &$[X_1,X_2]=X_3, [X_1,X_3]=X_4,[X_1,X_4]=X_5,[X_2,X_3]=X_5,[Y_6,X_i]=0\text{ for all }i$ \\ 
$L_{6,7}=L_{5,7}\o\bb{C}$          &$[X_1,X_2]=X_3, [X_1,X_3]=X_4,[X_1,X_4]=X_5,[Y_6,X_i]=0\text{ for all }i$ \\ 
$L_{6,8}=L_{5,8}\o\bb{C}$          &$[X_1,X_2]=X_3, [X_1,X_4]=X_5,[Y_6,X_i]=0\text{ for all }i$ \\ 
$L_{6,9}=L_{5,9}\o\bb{C}$          &$[X_1,X_2]=X_3, [X_2,X_3]=X_4,[X_1,X_3]=X_5,[Y_6,X_i]=0\text{ for all }i$ \\ 
$L_{6,10}$          &$[X_2,X_3]=X_4, [X_5,X_1]=X_6, [X_2,X_4]=X_6$ \\ 
$L_{6,11}$          &$[X_1,X_2]=X_3, [X_1,X_3]=X_5,[X_1,X_5]=X_6,[X_2,X_3]=X_6,[X_2,X_4]=X_6$ \\ 
$L_{6,12}$          &$[X_2,X_3]=X_4, [X_2,X_4]=X_5,[X_3,X_1]=X_6,[X_2,X_5]=X_6$ \\ 
$L_{6,13}$          &$[X_1,X_3]=X_4, [X_1,X_4]=X_5,[X_3,X_2]=X_5,[X_1,X_5]=X_6,[X_4,X_2]=X_6$ \\ 
$L_{6,14}$          &$[X_1,X_2]=X_3, [X_1,X_3]=X_4,[X_1,X_4]=X_5,[X_2,X_3]=X_5,[X_2,X_5]=X_6,[X_4,X_3]=X_6$ \\ 
$L_{6,15}$          &$[X_1,X_2]=X_3, [X_1,X_3]=X_4,[X_1,X_4]=X_5,[X_2,X_3]=X_5,[X_2,X_5]=X_6,[X_2,X_4]=X_6$ \\ 
$L_{6,16}$          &$[X_1,X_2]=X_3, [X_1,X_3]=X_4,[X_1,X_4]=X_5,[X_2,X_5]=X_6,[X_4,X_3]=X_6$ \\ 
$L_{6,17}$          &$[X_2,X_1]=X_3, [X_2,X_3]=X_4,[X_2,X_4]=X_5,[X_1,X_3]=X_6,[X_2,X_5]=X_6$ \\ 
$L_{6,18}$          &$[X_1,X_2]=X_3, [X_1,X_3]=X_4,[X_1,X_4]=X_5,[X_1,X_5]=X_6$ \\ 
$L_{6,19}(-1)$          &$[X_1,X_2]=X_3, [X_1,X_4]=X_5,[X_2,X_5]=X_6,[X_4,X_3]=X_6$ \\ 
$L_{6,20}$          &$[X_1,X_2]=X_3, [X_1,X_4]=X_5,[X_1,X_5]=X_6,[X_2,X_3]=X_6$ \\ 
$L_{6,21}(-1)$          &$[X_1,X_2]=X_3, [X_2,X_3]=X_4,[X_1,X_3]=X_5,[X_1,X_4]=X_6,[X_2,X_5]=X_6$ \\ 
$L_{6,22}(0)$          &$[X_2,X_4]=X_5, [X_4,X_1]=X_6,[X_2,X_3]=X_6$ \\ 
$L_{6,22}(1)$          &$[X_1,X_2]=X_3, [X_4,X_5]=X_6$ \\ 
$L_{6,23}$          &$[X_1,X_2]=X_3, [X_1,X_4]=X_5,[X_1,X_5]=X_6,[X_4,X_2]=X_6$ \\ 
$L_{6,24}(0)$          &$[X_1,X_3]=X_4, [X_3,X_4]=X_5,[X_1,X_4]=X_6,[X_3,X_2]=X_6$ \\ 
$L_{6,24}(1)$          &$[X_1,X_2]=X_3, [X_2,X_3]=X_5,[X_2,X_4]=X_5,[X_1,X_3]=X_6$ \\ 
$L_{6,25}$          &$[X_1,X_2]=X_3, [X_1,X_3]=X_4,[X_1,X_5]=X_6$ \\ 
$L_{6,26}$          &$[X_1,X_2]=X_3, [X_2,X_4]=X_5,[X_1,X_4]=X_6$ \\ 
$L_{6,27}$          &$[X_1,X_2]=X_3, [X_1,X_3]=X_4,[X_2,X_5]=X_6$ \\ 
$L_{6,28}$          &$[X_1,X_2]=X_3, [X_2,X_3]=X_4,[X_1,X_3]=X_5,[X_1,X_5]=X_6$ \\ 
\bottomrule
\end{longtable}
\newpage
\begin{table}
\centering{{\bf {\sc Table $4$} Comparing the condition {\bf (W)} and {\bf (H)} in dimension $6$}}
\begin{tabular}{|l|c|c|l|l|}
\toprule
          &\hspace{-0.cm}{\bf (W)} & {\bf (W) and (H)} & grading satis. {\bf (W)} &{\bf (W)} and {\bf (H)} \\ \midrule
$L_{6,1}=\bb{C}^4$ &$\bigcirc$            & $\bigcirc$                                                          & $\ang{X_1,X_2,X_3,X_4}_{-1}\o\ang{X_5,X_6}_{-2}$ &Same as {\bf (W)} \\ 
$L_{6,2}$($4$-filiform) &$\bigcirc$            & $\bigcirc$                                                          & $\ang{X_1,X_2}_{-1}\o \ang{X_3,Y_4,Y_5,Y_6}_{-2}$ & Same as {\bf (W)} \\
$L_{6,3}$($3$-filiform) & $\bigcirc$     & $\bigtimes$                                                          & $\ang{X_1,X_2}_{-1}\o \ang{X_3,Y_5,Y_6}_{-2}\o \ang{X_4}_{-3}$ &  \\ 
$L_{6,4}$    & $\bigcirc$                           &  $\bigcirc$                                                         & $\ang{X_1,X_2,X_3,X_4}_{-1}\o \ang{X_5,Y_6}_{-2}$ & Same as {\bf (W)} \\ 
$L_{6,5}$($3$-filiform)     & $\bigcirc$                           &$\bigtimes$                                                           &$\ang{X_1,X_3}_{-1}\o\ang{X_2,X_4,Y_6}_{-2}\o\ang{X_5}_{-3}$  &  \\
$L_{6,6}$($2$-filiform)    & $??$                           &$\bigtimes$                                                           &  &  \\
$L_{6,7}$($2$-filiform)    & $??$                           &$\bigtimes$                                                           &  &  \\
$L_{6,8}$     & $\bigcirc$                           &$\bigtimes$                                                           & $\ang{X_1,X_2,X_4}_{-1}\o\ang{X_3,X_5,Y_6}_{-2}$ &  \\
$L_{6,9}$    & $\bigcirc$                           &$\bigcirc$                                                           & $\ang{X_1,X_2}_{-1}\o \ang{X_3,Y_6}_{-2}\o \ang{X_4,X_5}_{-3}$ & Same as {\bf (W)}\\
$L_{6,10}$($3$-filiform) &\hspace{0.00cm}$\bigcirc$            & $\bigtimes$                                                          & $\ang{X_1,X_2,X_3}_{-1}\o\ang{X_4,X_5}_{-2}\o\ang{X_6}_{-3}$ & \\
$L_{6,a}$($2$-filiform, $a=11,12,13$) &\hspace{0.00cm}$??$            & $\bigtimes$& &\\
$L_{6,b}$(filiform,$14\leq b\leq 18$) &\hspace{0.00cm}$??$            & $\bigtimes$                                                          &  & \\
$L_{6,19}(-1)$ &\hspace{0.00cm}$\bigcirc $            & $\bigtimes $                                                          &$\ang{X_1,X_2,X_3}_{-1}\o\ang{X_3,X_5}_{-2}\o\ang{X_6}_{-3}$  & \\
$L_{6,20}$ &\hspace{0.00cm}$\bigcirc$            & $\bigtimes $                                                          &$\ang{X_1,X_2,X_4}_{-1}\o \ang{X_3,X_5}_{-2}\o\ang{X_6}_{-3}$  & \\
$L_{6,21}(-1)$ &\hspace{0.00cm}$\bigcirc$            & $\bigcirc$                                                          &$\ang{X_1,X_2}_{-1}\o\ang{X_3}_{-2}\o\ang{X_4,X_5}_{-3}\o\ang{X_6}_{-4}$  &Same as {\bf (W)} \\
$L_{6,22}(0)$ &\hspace{0.00cm}$\bigcirc$            & $\bigcirc$                                                          &$\ang{X_1,X_2,X_3,X_4}_{-1}\o\ang{X_5,X_6}_{-2}$  &Same as {\bf (W)} \\
$L_{6,22}(1)$ &\hspace{0.00cm}$\bigcirc$            &$\bigcirc$                                                          &$\ang{X_1,X_2,X_4,X_5}_{-1}\o\ang{X_3,X_6}_{-2}$  &Same as {\bf (W)}  \\
$L_{6,23}$ &\hspace{0.00cm}$??$            & $\bigtimes$                                                          &  & \\
$L_{6,24}(0)$ &\hspace{0.00cm}$\bigcirc$            & $\bigcirc$                                                          &$\ang{X_1,X_3}_{-1}\o \ang{X_2,X_4}_{-2}\o\ang{X_5,X_6}_{-3}$  &Same as {\bf (W)} \\
$L_{6,24}(1)$ &\hspace{0.00cm}$\bigcirc$            & $\bigcirc$                                                          &$\ang{X_1,X_2}_{-1}\o \ang{X_3,X_4}_{-2}\o\ang{X_5,X_6}_{-3}$  &Same as {\bf (W)} \\
$L_{6,25}$ &\hspace{0.00cm}$\bigcirc$            & $\bigtimes$                                                          &$\ang{X_1,X_2,X_5}_{-1}\o\ang{X_3,X_6}_{-2}\o\ang{X_4}_{-3}$  & \\
$L_{6,26}$ &\hspace{0.00cm}$\bigcirc$            & $\bigtimes$                                                          & $\ang{X_1,X_2,X_4}_{-1}\o\ang{X_3,X_5,X_6}_{-2}$ & \\
$L_{6,27}$&\hspace{0.00cm}$\bigcirc$            & $\bigtimes$                                                          & $\ang{X_1,X_2,X_5}_{-1}\o\ang{X_3,X_6}_{-2}\o\ang{X_4}_{-3}$ & \\
$L_{6,28}$&\hspace{0.00cm}$??$            & $\bigtimes$                                                          &  & \\
\bottomrule
\end{tabular}
\end{table}

\end{document}